\theoremstyle{plain}
\newtheorem{thm}[subsection]{Theorem}
\newtheorem{prop}[subsection]{Proposition}
\newtheorem{cor}[subsection]{Corollary}
\newtheorem{lem}[subsection]{Lemma}
\theoremstyle{definition}
\newtheorem{rem}[subsection]{Remark}
\newtheorem{para}[subsection]{}
\newenvironment{pf}{\proof[\proofname]}{\endproof}
\newcommand{\ul}[1]{\underline{#1}}
\newcommand{\Pic}{{\rm Pic}}
\newcommand{\Div}{{\rm Div}}
\newcommand{\Hom}{{\rm Hom}}
\newcommand{\Ext}{{\rm Ext}}
\newcommand{\im}{{\rm im}}
\newcommand{\Spec}{{\rm Spec}}
\newcommand{\Alb}{{\rm Alb}}
\newcommand{\gr}{{\rm gr}}
\newcommand{\Ker}{{\rm Ker}}
\newcommand{\Lie}{{\rm Lie}}
\newcommand{\infi}{{\rm inf}}
\newcommand{\et}{{\rm \acute{e}t}}
\newcommand{\modu}{{\rm mod}}
\newcommand{\Ab}[1]{{\mathcal A} {\mathit b}/#1}
\newcommand{\sC}{{\mathcal C}}
\newcommand{\sD}{{\mathcal D}}
\newcommand{\sF}{{\mathcal F}}
\newcommand{\sH}{{\mathcal H}}
\newcommand{\sM}{{\mathcal M}}
\newcommand{\sO}{{\mathcal O}}
\newcommand{\C}{{\mathbb C}}
\newcommand{\G}{{\mathbb G}}
\newcommand{\N}{{\mathbb N}}
\newcommand{\Q}{{\mathbb Q}}
\newcommand{\Z}{{\mathbb Z}}
\newcommand{\phe}{{\varphi}}
\newcommand{\bs}{{\backslash}}
\newcommand{\lra}{\longrightarrow}
\begin{document}


\centerline{ } 
\vspace{25pt} 
\centerline{\LARGE Albanese varieties with modulus and Hodge theory}
\vspace{20pt}
\centerline{\large Kazuya Kato and Henrik Russell} 
\vspace{10pt}
\centerline{\large May 2009} 
\vspace{20pt}

\section{Introduction}

\begin{para} 
Let $X$ be a proper smooth variety over a field $k$ of characteristic
$0$, and let $\Alb(X)$ be the Albanese variety of $X$. In the work
\cite{Ru}, the second author constructed generalized Albanese varieties $\Alb_{\sF}(X)$, which are commutative connected algebraic groups over $k$ with surjective
homomorphisms $\Alb_{\sF}(X)\to \Alb(X)$ (see section~\ref{sec:Alb_with_mod}
 for a review).
If $Y$ is an effective divisor on $X$, a special case of
$\Alb_{\sF}(X)$ becomes the generalized Albanese variety $\Alb(X, Y)$ of
$X$ of modulus $Y$ (cf. section~\ref{sec:Alb_with_mod}). This is a higher dimensional analogue of the
generalized Jacobian variety with modulus of Rosenlicht-Serre. Note that
the divisor $Y$ can have multiplicity, and so the algebraic group
$\Alb(X, Y)$ can have an additive part.

Assume now $k=\C$. 
The purpose of this paper is to give  Hodge theoretic presentations (Thm.~\ref{mainthm}) of $\Alb(X, Y)$.

The case $Y$ has no multiplicity was studied in the work \cite{BaS} of 
Barbieri-Viale and Srinivas. 
A Hodge theoretic presentation of a generalized Albanese variety 
in the case without modulus but allowing the singularity of $X$ 
was given in the work \cite{ESV} of Esnault, Srinivas and Viehweg. 
\end{para}

\begin{para}\label{curve} First we review the curve case. Let $X$ be a proper smooth curve over $\C$ and let $Y$ be an effective divisor on $X$. In this case, the Albanese variety $\Alb(X, Y)$ of $X$ relative to $Y$ coincides with the generalized Jacobian variety $J(X, Y)$ of $X$ relative to $Y$. In the following, we will write the complex analytic space associated to $X$ simply by $X$, and the sheaf of holomorphic functions on it by $\sO_X$. Let $I=\text{Ker}(\sO_X\to \sO_Y)$ be the ideal of $\sO_X$ which defines $Y$.
The cohomology below is for the topology of the analytic space $X$ (not for Zariski topology). 

The generalized Jacobian variety $J(X, Y)$  
 is
the kernel of the degree map $\text{Pic}(X, Y)\to \Z$
where $\text{Pic}(X, Y)=H^1(X, \text{Ker}(\sO_X^\times \to \sO_Y^\times))$. 
Let $j: X-Y\lra X$ be the inclusion map and let $j_!\Z(1)$ be the $0$-extension of the constant sheaf $\Z(1)$ of $X-Y$ to $X$. (For $r\in \Z$, $\Z(r)$ denotes $\Z (2\pi i)^r$ as usual.) Then we have an exact sequence
$$0 \lra j_!\Z(1) \lra I \overset{\exp}\longrightarrow \Ker(\sO_X^\times \to \sO_Y^\times)\lra 0$$
and hence we have an isomorphism 
\begin{equation} \text{Pic}(X, Y) \cong  H^2(X,
[j_!\Z(1) \to I]).\end{equation}
 Here in the complex $[j_!\Z(1) \to I]$, 
$j_!\Z(1)$ is put in degree $0$.

We have another presentation of $J(X, Y)$ given in (2) below. Let $I_1$ be the ideal of $\sO_X$ which defines the reduced part of $Y$ and let
$J=II_1^{-1}\subset \sO_X$. Note that the composition of the two inclusion maps of complexes
$$[I \overset{d}\to J\Omega^1_X] \lra [I \overset{d}\to \Omega^1_X]
\lra [I_1 \overset{d}\to \Omega^1_X]$$
is a quasi-isomorphism. Hence we have an isomorphism in the derived category
$$ [I \overset{d}\to \Omega^1_X]\cong [I_1 \overset{d}\to \Omega^1_X]\oplus
(\Omega^1_X/J\Omega^1_X)[-1].$$
Since $j_!\C\lra  [I_1 \overset{d}\to \Omega^1_X]$ is a quasi-isomorphism, we have
an exact sequence 
\begin{equation} H^0(X, \Omega^1_X) \lra H^1_c(X-Y, \C/\Z(1))\oplus H^0(X, \Omega^1_X/J\Omega^1_X)\lra J(X, Y) \lra 0.\end{equation}
(Here $H_c$ is the cohomology with compact supports.)

\end{para}

\begin{para} Now let $X$ be a proper smooth variety over $\C$ of dimension $n$ and let $Y$ be an effective divisor on $X$.

 Again in the following theorem, cohomology groups are for the topology of the complex analytic spaces, and the notation $\sO$ and $\Omega$ stand for analytic sheaves.

Let $I$ be the ideal of $\sO_X$ which defines $Y$, let $I_1$ be the ideal of $\sO_X$ which defines the reduced part of $Y$, and let $J=II_1^{-1}\subset \sO_X$.

\end{para}

\begin{thm}\label{mainthm} {\rm(1)} 
We have an exact sequence  $$0\lra \Alb(X, Y)\lra
H^{2n}(X, \sD_{X,Y}(n))\overset{\deg}\longrightarrow  \Z\lra 0,$$ 
where for $r\in \Z$,  $\sD_{X,Y}(r)$ denotes the kernel of the surjective homomorphism of complexes $\sD_X(r)\to \sD_Y(r)$
with $D_X(r)$ the Deligne complex
$$[\Z(r) \to  \sO_X \overset{d}\to \Omega_X^1  \overset{d}\to\ldots \overset{d}\to 
\Omega_X^{r-1}]$$
and $\sD_Y(r)$ the similar complex
$$[\Z(r)_Y \to  \sO_Y \overset{d}\to \Omega_Y^1  \overset{d}\to\ldots \overset{d}\to 
\Omega_Y^{r-1}].$$
\medskip

{\rm(2)} We have an exact sequence
$$H^{n-1}(X, \Omega_X^n) \lra H^{2n-1}_c(X-Y, \C/\Z(n)) \oplus H^{n-1}(X, \Omega^n_X/J\Omega_X^n) \lra \Alb(X, Y) \lra 0.$$

\medskip

\end{thm}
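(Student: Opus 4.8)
The plan is to establish part~(1) as the top-degree, Deligne--Beilinson-cohomology analogue of the isomorphism $\text{Pic}(X,Y)\cong H^2(X,\sD_{X,Y}(1))$ from~\ref{curve}, and then to deduce part~(2) from part~(1) by a de Rham decomposition generalizing the splitting $[I\to\Omega^1_X]\cong[I_1\to\Omega^1_X]\oplus(\Omega^1_X/J\Omega^1_X)[-1]$ used there.

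For part~(1), first I would record the two short exact sequences of complexes governing $\sD_{X,Y}(n)$: the defining one $0\to\sD_{X,Y}(n)\to\sD_X(n)\to\sD_Y(n)\to 0$, and the relative ``Betti versus filtered de Rham'' sequence whose degree-$0$ term is $j_!\Z(n)=\Ker(\Z(n)\to\Z(n)_Y)$ and whose de Rham part has terms $\Ker(\Omega^p_X\to\Omega^p_Y)$. Taking hypercohomology, the degree map is $H^{2n}(X,\sD_{X,Y}(n))\to H^{2n}(X,\sD_X(n))\to H^{2n}(X,\Z(n))=\Z$, and I would prove surjectivity using the cycle class of a closed point of $X-Y$, which lives in the relative theory and maps to $1$. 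The kernel is then identified, through the long exact sequence, with a generalized intermediate Jacobian: the dimension constraints $\Omega^p_X=0$ for $p>n$ force $H^{2n-1}(X,\C)=H^{n,n-1}\oplus H^{n-1,n}$ and $F^nH^{2n-1}=H^{n,n-1}$, so that in the absolute case the kernel is $H^{n-1}(X,\Omega^n_X)$ modulo the lattice $H^{2n-1}(X,\Z(n))$, which is exactly $\Alb(X)=J^n(X)$. The relative version replaces the lattice by $H^{2n-1}_c(X-Y,\Z(n))=H^{2n-1}(X,j_!\Z(n))$ and adds the additive correction coming from the non-reduced structure of $Y$.

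The \textbf{main obstacle} is this final identification: that the commutative algebraic group $\Ker(\deg)$ obtained from relative Deligne cohomology is Russell's $\Alb(X,Y)$. This requires, first, the local computation $\Ker(\Omega^n_X\to\Omega^n_Y)=J\Omega^n_X$ with $J=II_1^{-1}$ --- where the multiplicities of $Y$ enter through $I_1$, as one already sees for $n=1$ --- and, second, a comparison of Hodge data: one must match the vector space, lattice and filtration cut out by $\sD_{X,Y}(n)$ with those defining $\Alb(X,Y)$ in \S\ref{sec:Alb_with_mod} and \cite{Ru}, after checking that no spurious contributions from neighbouring cohomological degrees occur. I expect this to be the step that genuinely uses $k=\C$ and the universal property of $\Alb(X,Y)$, and to proceed by showing that both groups have the same Lie algebra, lattice and extension structure (abelian part $\Alb(X)$, plus the toric/unipotent part dictated by $\Omega^n_X/J\Omega^n_X$).

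For part~(2) I would repackage part~(1) in de Rham terms exactly as in~\ref{curve}. Splitting off the top piece of the relative filtered de Rham complex, the inclusions of complexes through $J\Omega^n_X$ in top degree form a quasi-isomorphism, yielding in the derived category a direct summand $(\Omega^n_X/J\Omega^n_X)[-n]$, whose $\mathbb{H}^{2n-1}$ is $H^{n-1}(X,\Omega^n_X/J\Omega^n_X)$. The complementary summand is the reduced-modulus de Rham complex, which is quasi-isomorphic to $j_!\C$ and so, after combining with the lattice $j_!\Z(n)$, contributes $H^{2n-1}_c(X-Y,\C/\Z(n))$. Substituting this decomposition into the long exact sequence relating $j_!\Z(n)$, the relative de Rham complex, and $\sD_{X,Y}(n)$, and passing to total degree $2n-1$, then gives the asserted sequence, with $H^{n-1}(X,\Omega^n_X)$ mapping diagonally into the two summands and cokernel $\Alb(X,Y)$ by part~(1). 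The one point requiring care beyond the curve case is the relative Poincaré lemma identifying the reduced-modulus de Rham complex with $j_!\C$ when $Y_{\mathrm{red}}$ is singular, which I would either prove directly for Kähler differentials or reduce to the normal-crossings situation.
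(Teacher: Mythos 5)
Your overall architecture---the equivalence between (1) and (2) through a splitting of the relative de Rham complex, with $H^{n-1}(X,\Omega^n_X)$ mapping diagonally into the two summands---is exactly the one in Remark~\ref{1.5}(c) and \S\ref{proof}, and your reading of the absolute case is correct. But there are two genuine gaps, and the first is fatal as the proposal stands: you never prove that the group $\Ker(\deg)$ cut out by relative Deligne cohomology is Russell's $\Alb(X,Y)$. You flag this yourself as the ``main obstacle'' and propose to ``match Lie algebra, lattice and extension structure'', but there is no a priori map between the two groups to compare: $\Alb(X,Y)$ is defined in \S\ref{sec:Alb_with_mod} as the Cartier dual of the Laumon $1$-motive $[\sF_{X,Y}\to\ul{\Pic}^0_X]$, i.e.\ it lives on the Picard side, while every datum you want to match (the lattice $H^{2n-1}_c(X-Y,\Z(n))$, the additive part $H^{n-1}(X,\Omega^n_X/J\Omega^n_X)$, the filtration) sits on the Albanese side. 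The bridge is precisely the content of the paper's proof: Barbieri-Viale's equivalence $\sH_1\simeq\sM_1$ (Thm.~\ref{thm2}), compatible with the dualities, transports $[\sF_{X,Y}\to\Pic^0(X)]$ to the object $H^1(X,Y_+)(1)$; the Barbieri-Viale--Srinivas theorem (\cite{BaS}, Thm.~4.7) identifies its lattice with $H^1(X-Y,\Z(1))$ and its Lie map with the canonical map $H^1(X-Y,\C)\to H^1(X,\sO_X)$, the infinitesimal part being $H^0(X,J^{-1}/\sO_X)$ (\ref{H1}); then duality in $\sH$---this is where Poincar\'e duality and Serre duality enter, converting $H^1(X-Y)$ into $H^{2n-1}_c(X-Y)$ and $H^0(X,J^{-1}/\sO_X)$ into $H^{n-1}(X,\Omega^n_X/J\Omega^n_X)$---gives the description \ref{Hn-1} of the object corresponding to $\Alb(X,Y)$, whence (2) by \ref{ext1}. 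Alternatively one could try to verify the universal property of Prop.~\ref{propmodulus}(2) directly for $\Ker(\deg)$, but that is a different, substantial argument (one must construct the rational map from $X-Y$, compute its modulus, and check universality against all rational maps into commutative algebraic groups), and you give no indication of how to carry it out.

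The second gap is your ``relative Poincar\'e lemma''. For singular reduced $Y_1$ the map $j_!\C\to\Ker(\Omega^\bullet_X\to\Omega^{\le n-1}_{Y_1})$ is in general \emph{not} a quasi-isomorphism: it would force the K\"ahler--de Rham complex of the singular variety $Y_1$ to compute its Betti cohomology, which fails for general singular varieties. This is exactly why the paper proves only what is needed, namely that $H^p_c(X-Y,\C)\to H^p(X,C_Y)$ is an isomorphism in the two top degrees $p=2n-1,2n$ (Prop.~\ref{propHc}), and why even that requires real work: the reduced case uses dimension-vanishing in top degrees, and the general case needs the d\'evissage of Lemma~\ref{Y'Y''}, a limit over all divisors $Y'$ supported on $Y_1$, Deligne's duality theorem recalled in \ref{Deligne}, and the algebraic de Rham comparison for $X-Y$. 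Your fallback of ``reducing to normal crossings'' does not obviously work either: the complexes $C_Y$, $S_Y$ and the group $H^{n-1}(X,\Omega^n_X/J\Omega^n_X)$ are not compatible with pullback under a blow-up $X'\to X$ (the multiplicities of the total transform change), so the statement does not simply transport. Your reversal of direction---proving (1) first and deducing (2)---is immaterial, since the splitting argument runs both ways; the problem is that neither endpoint is actually anchored to $\Alb(X,Y)$ without the $1$-motive duality machinery, and the splitting itself rests on the unproved Prop.~\ref{propHc}.
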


Note that the case $n=1$ of Thm.~\ref{mainthm} (1) (resp. (2)) becomes the presentation 
of $J(X, Y)$ given by (1) (resp. (2)) in \ref{curve}. 

\medskip

\begin{rem}\label{1.5} We give some remarks on this theorem.

\medskip

(a) The case $Y=0$ of Thm.~\ref{mainthm} (1) is nothing but the well known exact sequence  
\begin{equation}
\quad 0 \lra \Alb(X) \lra H^{2n}(X, \sD_X(n)) \overset{\deg}\longrightarrow  \Z\lra 0
\end{equation} 
by using the Deligne cohomology $H^{2n}(X, \sD_X(n))$. 
(Usually the Deligne cohomology 
$H^m(X, \sD_X(r))$ is denoted by $H^m_D(X, \Z(r))$.)
\medskip

The case $Y=0$ of Thm.~\ref{mainthm} (2) is nothing but the usual presentation 
\begin{equation} 
\Alb(X) \cong H_\Z\bs H_\C/F^0H_\C
\end{equation}
 of the Albanese variety 
$\Alb(X)$ of $X$, where $(H_\Z, H_\C, F^{\bullet})$
 is the following Hodge structure of weight $-1$. 
 $H_\Z=H^{2n-1}(X, \Z(n))/(\text{torsion part})$, 
 $H_\C=\C\otimes_{\Z} H_\Z=H^{2n-1}(X, \Omega_X^{\bullet})$, and $F^{\bullet}$ is the Hodge filtration on $H_\C$ defined as
$$F^{-1}=H_\C, \quad F^0=H^{n-1}(X, \Omega^n_X), \quad F^1=0.$$

\medskip

(b) Recall that the presentations (3) and (4) of
$\Alb(X)$ are related as follows. Consider the exact sequence of complexes
$0 \to \Omega_X^{\leq n-1}[-1] \to \sD_X(n) \to \Z(n) \to 0$, where $\Omega_X^{\leq n-1}$ denotes the part of degree $\leq n-1$ of the de Rham complex $\Omega_X^{\bullet}$, which is actually a quotient complex of $\Omega_X^{\bullet}$. By taking the cohomology associated to this exact sequence, we have an exact sequence
$$H^{2n-1}(X, \Z(n)) \lra H^{2n-1}(X, \Omega_X^{\leq n-1})\lra H^{2n}_D(X, \Z(n))
\overset{\deg}\longrightarrow   \Z\lra 0.$$
Since 
$$H^{2n-1}(X, \Omega_X^{\leq n-1})\cong H^{2n-1}(X, \Omega_X^{\bullet})/H^{n-1}(X, \Omega^n_X) \cong H^{2n-1}(X, \C)/H^{n-1}(X, \Omega^n_X),$$
the exact sequence (4) is equivalent to (3).

\medskip

(c) (1) and (2) of Thm.~\ref{mainthm} are related similarly. 
Let $S$ be the subcomplex of the de Rham complex 
$\Omega_X^{\bullet}$ of $X$ defined by 
$S^p=\Ker(\Omega_X^p\to \Omega_Y^p)$ for $0\leq p\leq n-1$ 
and $S^n=\Omega_X^n$. 
Then Thm.~\ref{mainthm} (1) is equivalent to
$$\Alb(X, Y) \cong H_\Z\bs H^{2n-1}(X, S)/H^{n-1}(X, \Omega_X^n)$$ 
where \;\; $H_\Z= H^{2n-1}_c(X-Y, \Z(n))/(\text{torsion part})$. 
As is shown in \S~\ref{proof}, we have a commutative diagram with an isomorphism in the lower row
$$\begin{matrix}
H^{n-1}(X, \Omega_X^n) & = & H^{n-1}(X, \Omega_X^n) \hspace{20pt} \\
\downarrow & & \downarrow \hspace{15pt} \\
H^{2n-1}(X, S) & \cong & H_c^{2n-1}(X-Y, \C)\oplus H^{n-1}(X, \Omega^n_X/J\Omega_X^n).
\end{matrix}$$
Thus (1) and (2) of Thm.~\ref{mainthm} are deduced from each other.

\end{rem}

\begin{para}\label{1.6} As mentioned above, Thm.~\ref{mainthm}
shows that $\Alb(X, Y)$ is expressed as 
$H_\Z\bs H_V/F^0$ where:

\medskip

 $H_\Z=H^{2n-1}_c(X-Y, \Z(n))/(\text{torsion part})$,

\medskip

$H_V=H_\C\oplus H^{n-1}(X, \Omega^n_X/J\Omega_X^n)\cong H^{2n-1}(X, S)$

($H_\C=\C \otimes H_\Z$ and 
  $S$ is as in 1.5 (d)),

\medskip
$F^{\bullet}$ is the decreasing filtration on $H_V$ given by
$$F^{-1}=H_V, \quad F^0=H^{n-1}(X, \Omega^n_X), \quad F^1=0.$$

\medskip

Note that $H_V$ can be different from $H_\C$ here, and so $(H_\Z, H_V, F^{\bullet})$ here need not be a Hodge structure. It is some kind of 
``mixed Hodge structure with additive part''. 
 This object $(H_\Z, H_V, F^{\bullet})$ with a weight filtration, which we will denote by $H^{2n-1}(X, Y_-)(n)$ in section~\ref{proof}, 
belongs to a category $\sH$ introduced in section~\ref{Hodge} which contains the category of mixed Hodge structures but is bigger than it. In the proof of Thm.~\ref{mainthm}, it is essential to consider such object. This category $\sH$ is related to the category of refined Hodge structures of Bloch-Srinivas \cite{BlS} and to the category of formal Hodge structures of Barbieri-Viale \cite{BV}. In the proof of Thm.~\ref{mainthm}, we use the result of Barbieri-Viale in \cite{BV} on the Hodge theoretic description of the category of ``$1$-motives with additive parts'' over $\C$.
\end{para}

\begin{para} The theory of generalized Albanese variety in characteristic $p>0$ is given in 
\cite{Ru3} basing on the duality theory \cite{Ru2} of $1$-motives with unipotent parts in characteristic $p>0$.

In characteristic $p>0$, the syntomic cohomology is an analogue of the Deligne cohomology. 
 We expect that we can have presentations 
of the $p$-adic completion of $\text{Alb}(X, Y)(k)$ ($k$ is the base field), which is similar to Thm.~\ref{mainthm}, 
by using crystalline cohomology theory and syntomic cohomology theory.
\end{para}

We are thankful to Professor H\'el\`ene Esnault for advice.
\medskip

\section{Mixed Hodge structures with additive  parts }\label{Hodge}

\medskip
\begin{para} For a proper smooth variety $X$ over $\C$ of dimension $n$ and for an effective divisor $Y$ on $X$, we will have in section~\ref{proof} 
certain structures $H^1(X, Y_+)$ and $H^{2n-1}(X, Y_-)$ which are kinds of ``mixed Hodge structures with additive parts''. (These structures for the case $X$ is a curve are explained in \ref{example} below.) The authors imagine that there is a nice definition of the category of ``mixed Hodge structures with additive parts'', which contains these $H^1(X,Y_+)$ and $H^{2n-1}(X, Y_-)$ as objects, but can not define it.  Instead, we define
a category $\sH$ having these objects, which may be a very simple approximation of such nice category. 
\end{para}

\begin{para}
The category $\sH$. An object of $\sH$ is $H=(H_\Z, H_V, W_{\bullet}H_\Q, W_{\bullet}H_V, F^{\bullet}H_V, a, b)$, 
where $H_\Z$ is a finitely generated $\Z$-module, $H_V$ is a finite 
dimensional $\C$-vector space, $W_{\bullet}H_\Q$ is an increasing filtration on $H_\Q:=\Q\otimes H_\Z$ (called weight filtration), $W_{\bullet}H_V$ is an increasing filtration on $H_V$ (called weight filtration), $F^{\bullet}$ is a decreasing filtration on $H_V$ (called Hodge filtration), $a$ is a $\C$-linear map $H_\C:=\C\otimes H_\Z\to H_V$ which sends $W_wH_\C:=\C\otimes_{\Q} W_wH_\Q$ into 
$W_wH_V$ for any $w\in \Z$, and $b$ is a $\C$-linear map $H_V\to  H_\C$ which sends $W_wH_V$ into $W_wH_\C$ for any $w\in \Z$ such that $b\circ a$ is the identity map of $H_\C$.

We sometimes denote an object $H$ of $\sH$ simply as $(H_\Z, H_V)$.

The category of mixed Hodge structures is naturally embedded in $\sH$ as a full subcategory, by putting $H_V=H_\C$. 

The full subcategory of $\sH$ consisting of all objects $H$ such that $H_\Z$ are torsion free is clearly self-dual.

This category $\sH$ is a simpler version of the category of 
enriched Hodge structures in Bloch-Srinivas \cite{BlS}.

\end{para}

\begin{para}\label{example} {\bf Example.}
Let $X$ be a proper smooth curve over $\C$ and let $Y$ be an effective divisor on $X$. Let $I$ be the ideal of $\sO_X$ which defines $Y$, let $I_1$ be the ideal of $\sO_X$ which defines the reduced part of $Y$, and let $J=II_1^{-1}\subset \sO_X$. 

We define objects $H^1(X, Y_+)$ and $H^1(X, Y_-)$ of $\sH$. 

First, we define $H=H^1(X, Y_+)$. Let $$H_\Z=H^1(X-Y, \Z), \quad
H_V=H^1(X, [\sO_X \overset{d}\to I^{-1}\Omega_X^1]).$$
The map $a: H_\C\to H_V$ is 
$$H^1(X-Y, \C) \cong H^1(X, [\sO_X\to I_1^{-1}\Omega_X^1])\lra H^1(X, [\sO_X\to I^{-1}\Omega_X^1]).$$ 
The map $b: H_V\to H_\C$ is the composition
$$H^1(X, [\sO_X\to I^{-1}\Omega_X^1])\lra 
H^1(X, [J^{-1}\to I^{-1}\Omega_X^1])
\overset{\simeq}\longleftarrow
 H^1(X, [\sO_X\to I_1^{-1}\Omega_X])$$ 
\hfill $\cong H^1(X-Y, \C).$\\
The weight filtrations and the Hodge filtration are given by
$$W_2H_\Q=H_\Q, \quad W_1H_\Q=H^1(X, \Q),  \quad W_0H_\Q=0,$$
$$W_2H_V=H_V, \quad W_1H_V=H^1(X, \C),\quad W_0H_V=0,$$
where $H^1(X, \C)$ is embedded in $H_V$ via $a$, and 
$$F^0H_V=H_V, \quad F^1H_V=H^1(X, \C), \quad F^2H_\C=0.$$

Next, we define $H=H^1(X, Y_-)$. Let $$H_\Z=H_c^1(X-Y, \Z), \quad
H_V=H^1( X, [I\overset{d}\to \Omega_X^1]).$$
The map $a: H_\C\to H_V$ is the composition 
$$H^1_c(X-Y, \C) \cong H^1(X, [I_1\to \Omega_X^1])
\overset{\simeq}\longleftarrow  H^1(X, [I\to J\Omega_X^1])\lra 
H^1(X, [I\to \Omega_X^1]).$$ 
The map $b: H_V\to H_\C$ is  
$$H^1(X, [I\to\Omega_X^1])\lra H^1(X, [I_1\to \Omega_X^1])
\cong H_c^1(X-Y, \C).$$
The weight filtrations and the Hodge filtration are given by
$$W_1H_\Q=H_\Q, \quad W_0H_Q=\Ker(H_\Q\to H^1(X, \Q),  \quad W_{-1}H_\Q=0,$$
$$W_1H_V=H_V, \quad W_0H_V=\Ker(H_V\to H^1(X, \C)),\quad W_{-1}H_V=0,$$
where $H^1(X, \C)$ is regarded as quotient of $H_V$ via $b$, and 
$$F^0H_V=H_V, \quad F^1H_V=\Ker(H_V \to H^1(X, \sO_X)), \quad 
F^2H_\C=0.$$

Then we have exact sequences  
in $\sH$
$$0 \lra H^1(X) \lra H^1(X, Y_+) \lra H^0(Y)(-1) \lra \Z(-1)\lra 0,$$
$$0\lra \Z\lra H^0(Y) \lra H^1(X, Y_-)\lra H^1(X)\lra 0.$$
Here for $r\in \Z$, $\Z(r)$ is the usual Hodge structure $\Z(r)$ regarded as an object of $\sH$. $H^1(X)$ is also the usual Hodge structure of weight $1$ associated to the first cohomology of $X$, regarded as an object of $\sH$. Finally the object $H^0(Y)$ of $\sH$ is defined as below, and $H^0(Y)(-1)$ is the $-1$ Tate twist. 

The definition of $H=H^0(Y)$ is as follows. $H_\Z=H^0(Y, \Z)=\oplus_{y\in Y} \Z$. $H_V=H^0(Y, \sO_Y)$. $a$ is the canonical map $H^0(Y, \C)\to H^0(Y, \sO_Y)$. $b$ is the canonical map $H^0(Y, \sO_Y)\to H^0(Y, \C)$ given by $\sO_Y\to\C$ which is $\sO_{Y, y} \to \sO_{Y, y}/m_y=\C$ at each $y\in Y$ ($m_y$ denotes the maximal ideal of $\sO_{Y, y}$). The weight filtration 
and the Hodge filtration are given by
$$W_0H=H, \quad W_{-1}H=0,$$
$$F^0H_V=H_V, \quad F^1H_V=0.$$
 Note that $H_\C\to 
H_V$ can be like $\C\to \C[T]/(T^n)$, and need not be an isomorphism. 

The evident self-duality $\underline{\Hom}(\phantom{a}, \Z)$ 
for torsion free objects in $\sH$ induces
$$H^1(X, Y_-) \cong \underline{\Hom}(H^1(X, Y_+), \Z)(-1).$$

\end{para}

\medskip

\section{$1$-motives with additive parts}\label{1mot}

In \cite{La}, Laumon formulated the notion ``$1$-motive with additive part'' 
over a field of characteristic $0$. We review it assuming that the base field  is algebraically closed for simplicity. 

Fix an algebraically closed field $k$ of characteristic $0$.

\begin{para}
Let $\Ab{k}$ be the category of sheaves of abelian groups on the 
fppf-site of the category of affine schemes over $k$. 
Let $\sC^{[-1,0]}(\Ab{k})$ be the abelian category of complexes in $\Ab{k}$ concentrated in degrees $-1$ and $0$. 

A \emph{$1$-motive with additive part} over $k$ is an object of 
$\sC^{[-1,0]}(\Ab{k})$ of the form $[\sF\to G]$, 
where $G$ is a commutative connected algebraic group over $k$ and $\sF\cong \Z^t\oplus (\widehat \G_a)^s$ for some $t$ and $s$. 
(Cf.\ \cite[Def.\ (5.1.1)]{La}.) 
Here $\Z$ is regarded as a constant sheaf and $\widehat \G_a$ 
denotes the formal completion of the additive group $\G_a$ at $0$. 
Recall that for any commutative ring $R$, $\widehat \G_a(R)$ is the subgroup of the additive group $R$ consisting of all nilpotent elements. We have $\sF=\sF_\et\oplus \sF_{\infi}$ where $\sF_\et$ is the \'etale part of $\sF$ which corresponds to  $\Z^t$ in the above isomorphism and $\sF_{\infi}$ is the infinitesimal part of $\sF$ which corresponds to $(\widehat \G_a)^s$.

We denote the category of $1$-motives with additive parts over $k$ by $\sM_1$.

\end{para}

\begin{para}\label{dual}
The category $\sM_1$ admits a notion of duality (called ``Cartier duality''). 
Let $[\sF\to G]$ be a $1$-motive with additive part over $k$. Then we have the ``Cartier dual'' $[\sF^*\to G^*]$ of $[\sF\to G]$ which is an object of $\sM_1$ obtained as follows. Let
$0 \to L \to G \to A \to 0$ be the canonical decomposition of 
$G$ as an extension of an abelian variety $A$ by a commutative connected affine algebraic group $L$. Note that $L\cong (\G_m)^t 
\oplus (\G_a)^s$ for some $t$ and $s$. 
We have $$\sF^*= \ul{\Hom}_{\Ab{k}}(L,\G_m), \quad G^*=\ul{\Ext}^1_{\sC^{[-1,0]}(\Ab{k})}([\sF\to A],\G_m)$$
and the homomorphism $\sF^*\to G^*$ is 
the connecting homomorphism 
$$\ul{\Hom}_{\Ab{k}}(L,\G_m) \lra 
\ul{\Ext}^1_{\sC^{[-1,0]}(\Ab{k})}([\sF\to A],\G_m)$$ 
associated to the short exact sequence $0 \lra L \lra [\sF\to G] \lra [\sF\to A] \lra 0$ in $\sC^{[-1,0]}(\Ab{k})$. 
Since $$\ul{\Hom}_{\Ab{k}}(\G_m,\G_m)\cong \Z,
 \quad \ul{\Hom}_{\Ab{k}}(\G_a,\G_m)\cong 
 \widehat \G_a,$$ we have $\sF^*\simeq \Z^t\oplus (\widehat \G_a)^s$ for some $t$ and $s$. We have an exact sequence 
 $$0 \lra \ul{\Hom}_{\Ab{k}}(\sF, \G_m) \lra \ul{\Ext}^1_{\sC^{[-1,0]}(\Ab{k})}([\sF\to A],\G_m)\lra  \ul{\Ext}^1_{\Ab{k}}(A,\G_m)\lra 0,$$
 $\ul{\Ext}^1_{\Ab{k}}(A,\G_m)$ is the dual abelian variety
 of $A$, 
 and since $$\ul{\Hom}_{\Ab{k}}(\Z, \G_m)\cong \G_m, \quad \ul{\Hom}_{\Ab{k}}(\widehat \G_a, \G_m)\cong \G_a,$$
 $\ul{\Hom}_{\Ab{k}}(\sF, \G_m)\cong (\G_m)^t \oplus (\G_a)^s$ for some $t$ and $s$. Hence $G^*$ is a commutative connected algebraic group over $k$. Thus 
 $[\sF^*\to G^*]$ is a $1$-motive with additive part. The Cartier dual of
 $[\sF^*\to G^*]$ is canonically isomorphic to $[\sF\to G]$.

See \cite[section 5]{La} for details or \cite[section 1]{Ru} for a review.
\end{para}

\begin{para} Let $\sM_{1,\{-1, -2\}}$ be the full subcategory of $\sM_1$ consisting of all objects $[\sF\to G]$ such that $\sF=0$.

Let $\sM_{1, \{0, -1\}}$ be the full subcategory of $\sM_1$ consisting of all objects $[\sF\to G]$ such that $G$ is an abelian variety. 

Then the self-duality of $\sM_1$ in \ref{dual} induces an anti-equivalence between the categories 
$\sM_{1,\{-1, -2\}}$ and $\sM_{1, \{0, -1\}}$.
\end{para}

\section{Equivalences of categories}\label{equiv}

In \cite{BV}, Barbieri-Viale constructed a Hodge theoretic category and proved that in the case the base field is $\C$,  the category $\sM_1$ is equivalent to his Hodge theoretic category. 
Here we reformulate his equivalence in the style which is convenient for us, by using the category $\sH$ from section~\ref{Hodge}.
 
\begin{para}
The category $\sH_1$. An object of $\sH_1$ is an object $H$ of $\sH$ endowed with a splitting of the weight filtration on $\Ker(H_V\to H_\C)$
satisfying the following conditions (i)--(iv).

\medskip
(i) $H_\Z$ is torsion free,  $F^{-1}H_V=H_V$, $F^1H_V=0$, $W_0H=H$, $W_{-3}H=0$. 

\medskip
(ii) $\gr^W_{-1}H$ is a polarizable Hodge structure of weight $-1$. That is, $\gr^W_{-1}H_\C=\gr^W_{-1}H_V$ and $\gr^W_{-1}H_\Z$ with the Hodge filtration on $\gr^W_{-1}H_\C$ is a polarizable Hodge structure of weight $-1$. 

\medskip

(iii) $F^0\gr^W_0H_V=\gr^W_0H_V$.

\medskip

(iv) $F^0W_{-2}H_V=0$.

\medskip

Morphisms of $\sH_1$ are the evident ones. 

The category $\sH_1$ is self-dual by the functor $\underline{\Hom}(\phantom{a}, \Z)(1)$. 
\end{para}

\begin{para} For a subset $\Delta$ of $\{0, -1, -2\}$, let $\sH_{1,\Delta}$ be the full subcategory of $\sH_1$ consisting of all objects $H$ such that $\gr^W_wH=0$ unless $w\in \Delta$.

The categories $\sH_{1,\{-1, -2\}}$ and $\sH_{1, \{0, -1\}}$ are important for us. These categories are in fact defined  as full subcategories of $\sH$ without reference to the splitting of the weight filtration on $\Ker(H_V\to  H_\C)$, for the weight filtrations on $\Ker(H_V\to  H_\C)$ of objects of these categories are pure. 

Thus $\sH_{1, \{-1, -2\}}$ is the full subcategory of $\sH$ consisting of all objects $H$ satisfying the following conditions (i)--(iii). 

\medskip

(i) $H_\Z$ is torsion free,  $F^{-1}H_V=H_V$, $F^1H_V=0$, 
$W_{-1}H=H$, $W_{-3}H=0$. 

\medskip

(ii) $\gr^W_{-1}H$ is a polarizable Hodge structure of weight $-1$.

\medskip

(iii) $F^0W_{-2}H_V=0$.

\medskip

For example, the Tate twist  $H^1(X, Y_-)(1)$ of the object $H^1(X, Y_-)$ of $\sH$ in \ref{example} belongs to $\sH_{1, \{-1,-2\}}$.
\medskip

Similarly, $\sH_{1, \{0, -1\}}$ is the full subcategory of $\sH$ consisting of all objects $H$ satisfying the following conditions (i)--(iii). 

\medskip
(i)  $H_\Z$ is torsion free, $F^{-1}H_V=H_V$, $F^1H_V=0$, $W_0H=H$, $W_{-2}H=0$. 

\medskip
(ii) $\gr^W_{-1}H$ is a polarizable Hodge structure of weight $-1$.

\medskip

(iii) $F^0\gr^W_0H_V=\gr^W_0H_V$.

\medskip

For example, the Tate twist  $H^1(X, Y_+)(1)$ of the object $H^1(X, Y_+)$ of $\sH$ in \ref{example} belongs to $\sH_{1, \{0,-1\}}$.
\medskip

The self-duality $\underline{\Hom}(\phantom{a}, \Z)(1)$ of $\sH_1$ induces an anti-equivalence between the categories 
 $\sH_{1, \{-1,-2\}}$ and $\sH_{1,\{0,-1\}}$.
\end{para}

\begin{thm}\label{thm2} (This is a reformulation of the equivalence of categories proved by Barbieri-Viale in \cite{BV}.)
We have an equivalence of categories $\sH_1\simeq \sM_1$ which is compatible with the dualities, and which induces the equivalences  
$$\sH_{1,\{-1, 0\}}\simeq \sM_{1,\{-1,0\}}, \quad \sH_{1,\{-2, -1\}}\simeq \sM_{1,\{-2,-1\}}.$$
\end{thm}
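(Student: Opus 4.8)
The plan is to deduce the equivalence from Barbieri-Viale's theorem in \cite{BV} by translating the data of an object of $\sH_1$ into the data of his formal Hodge structures of level $\leq 1$ (denoted here $\mathbf{FHS}$), and then transporting the duality and the weight-support statements formally. Concretely, I would produce an identification $\sH_1 \simeq \mathbf{FHS}$ of categories, compose with his equivalence $\mathbf{FHS}\simeq \sM_1$, and finally check that the self-duality $\ul{\Hom}(\phantom{a},\Z)(1)$ matches Cartier duality and that the functor is weight-exact so as to respect the subcategories.

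First I would make the internal structure of an object explicit. Since $b\circ a={\rm id}_{H_\C}$, the map $a$ is injective, $b$ is surjective, and $U:=\Ker(H_V\to H_\C)=\Ker(b)$ is a canonical complement to $a(H_\C)$ in $H_V$, carrying the chosen splitting of its weight filtration. Conditions (iii) and (iv) of $\sH_1$ force $U=\gr^W_0U\oplus \gr^W_{-2}U$, with $F^0$ equal to all of $\gr^W_0U$ and to $0$ on $\gr^W_{-2}U$; meanwhile the triple $(H_\Z, H_\C, W_\bullet, F^\bullet)$ is an ordinary mixed Hodge structure of the type attached to $1$-motives (weights in $\{0,-1,-2\}$, polarizable $\gr^W_{-1}$, Hodge--Tate $\gr^W_0$ and $\gr^W_{-2}$). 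The pair consisting of this mixed Hodge structure together with the graded space $U$ and its compatibility with $F^\bullet$ is exactly the data of an object of $\mathbf{FHS}$; the splitting of $W_\bullet$ on $U$, built into $\sH_1$, is what rigidifies the additive part so that morphisms correspond.

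Next I would describe the realization functor $T\colon \sM_1\to \sH_1$ directly, to make the comparison concrete. For a $1$-motive with additive part $[\sF\to G]$ with $\sF=\Z^t\oplus(\widehat\G_a)^s$ and $0\to L\to G\to A\to 0$, $L=(\G_m)^{t'}\oplus(\G_a)^{s'}$, the lattice $H_\Z$ is the Betti realization of the underlying semiabelian $1$-motive (weight $0$ from $\Z^t$, weight $-1$ from $A$, weight $-2$ from the torus), $H_\C=\C\otimes H_\Z$, and $H_V$ adjoins the Lie algebras of the additive pieces: $\gr^W_0U=\Lie(\widehat\G_a)^{\oplus s}$ from $\sF_\infi$ and $\gr^W_{-2}U=\Lie(\G_a)^{\oplus s'}$ from the additive part of $L$, with $a,b$ the inclusion of and projection onto $a(H_\C)$. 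One checks conditions (i)--(iv) hold, so $T$ lands in $\sH_1$, and under the dictionary of the previous paragraph $T$ agrees with Barbieri-Viale's realization; since his functor is an equivalence onto $\sM_1$, so is $T$. The quasi-inverse reconstructs $[\sF\to G]$ from $\gr^W_{-1}H$ (an abelian variety $A$ by polarizability), the weight $-2$ part (a torus from the $\Z(1)$-summand together with a $\G_a^{s'}$ from $\gr^W_{-2}U$), and the weight $0$ part (a lattice $\Z^t$ with $\widehat\G_a^s$ from $\gr^W_0U$), the extension data being read off from $W_\bullet$ and $F^\bullet$.

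Finally, for duality and the subcategories: the self-duality $\ul{\Hom}(\phantom{a},\Z)(1)$ of $\sH_1$ interchanges $\gr^W_0$ with $\gr^W_{-2}$ and, on the additive part, interchanges $\gr^W_0U$ with $\gr^W_{-2}U'$ of the dual; this must be matched under $T$ with Cartier duality as in \ref{dual}, using $\ul{\Hom}_{\Ab{k}}(\G_a,\G_m)\cong\widehat\G_a$ and $\ul{\Hom}_{\Ab{k}}(\widehat\G_a,\G_m)\cong\G_a$. The induced equivalences $\sH_{1,\{0,-1\}}\simeq\sM_{1,\{0,-1\}}$ and $\sH_{1,\{-1,-2\}}\simeq\sM_{1,\{-1,-2\}}$ then follow because $T$ is weight-exact and both subcategories are cut out by the same support condition on $\gr^W$. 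I expect the main obstacle to be precisely this duality bookkeeping on the non-Hodge spaces $U$: verifying that the elementary linear-algebra duality $\ul{\Hom}(\phantom{a},\Z)(1)$ on the additive pieces is compatible, via $T$ and including the Tate twist, with Cartier duality's exchange of the $\G_a$'s in $L$ and the $\widehat\G_a$'s in $\sF$, and that this compatibility is natural in $[\sF\to G]$.
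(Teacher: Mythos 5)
Your functor $T\colon \sM_1\to\sH_1$ is never actually defined, because you specify $H_\Z$, $H_V$, the weight filtrations, the splitting, and the maps $a,b$, but not the Hodge filtration $F^\bullet H_V$ --- and $F^0$ is the one piece of structure in which the $1$-motive itself (the homomorphism $\sF\to G$ and the extension $0\to L\to G\to A\to 0$) is stored. The paper's construction \ref{MtoH} sets $F^0H_V=\Ker(H_V\to \Lie(G))$, where $H_V\to\Lie(G)$ is assembled from $H_\C\to\Lie(G)$, the inclusion $\Lie(L_a)\hookrightarrow\Lie(G)$, and the map $\Lie(\sF_\infi)\to\Lie(G)$ induced by $\sF_\infi\to G$. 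In particular $F^0$ does \emph{not} respect your decomposition $H_V=a(H_\C)\oplus\gr^W_0U\oplus\gr^W_{-2}U$: your opening claim that $F^0$ is ``all of $\gr^W_0U$ and $0$ on $\gr^W_{-2}U$'' is true only for the filtrations induced on the weight-graded pieces (conditions (iii) and (iv)), not for $F^0$ itself. The point is already visible for $M=[\widehat \G_a\overset{\iota}\to\G_a]$ with $\iota$ the canonical inclusion: there $H_\C=0$, $H_V=\Lie(\G_a)\oplus\Lie(\widehat \G_a)$, and the correct $F^0$ is the antidiagonal $\{(-x,x)\}$, whereas the split choice $F^0=\Lie(\widehat \G_a)$ is what corresponds to the non-isomorphic $1$-motive $[\widehat \G_a\overset{0}\to\G_a]$. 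Since morphisms in $\sH_1$ preserve the splitting, these two objects of $\sH_1$ are not isomorphic, so any version of $T$ whose $F^0$ splits collapses non-isomorphic $1$-motives and cannot be an equivalence. Without $F^0$ you also cannot check conditions (i), (iii), (iv), cannot formulate the duality compatibility, and cannot set up your dictionary with $\mathbf{FHS}$.

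A second gap sits in that dictionary itself: the assertion that an object of $\sH_1$ ``is exactly the data'' of one of Barbieri-Viale's formal Hodge structures is the entire content of the word ``reformulation'' and has to be proved, not asserted. It is subtler than it looks: the triple $(H_\Z,H_\C,W_\bullet,F^\bullet)$ you invoke does not typecheck, since $F^\bullet$ lives on $H_V$, and neither of the natural induced filtrations $a^{-1}(F^\bullet H_V)$ and $b(F^\bullet H_V)$ recovers, in general, the Hodge filtration of the Deligne mixed Hodge structure of the semiabelian reduction $[\sF_\et\to G/L_a]$ (the map $H_\C\to\Lie(G)$ can meet $\Lie(L_a)$ and the image of $\Lie(\sF_\infi)$ nontrivially). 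Note that the paper does not route through \cite{BV} at all: it writes down the functor $\sH_1\to\sM_1$ explicitly in \ref{HtoM} (namely $G=W_{-1}H_\Z\bs W_{-1}H_V/F^0W_{-1}H_V$, $\sF_\et=\gr^W_0H_\Z$, $\sF_\infi$ the formal completion of $\Ker(\gr^W_0H_V\to\gr^W_0H_\C)$, with $\sF\to G$ defined by lifting through $H_\Z$ and $F^0H_V$), writes down the functor \ref{MtoH} in the other direction, and checks they are quasi-inverse. That explicit inverse is not an optional convenience: its special case \ref{ext1}, $H\mapsto[0\to H_\Z\bs H_V/F^0H_V]$, is precisely what the proof of Thm.~\ref{mainthm} uses, so even if you complete the comparison with \cite{BV}, you would still need to extract these formulas.
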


The equivalence $\sH_1\simeq \sM_1$ 
is given in \ref{HtoM} and \ref{MtoH} below. 

\begin{para}\label{HtoM} 
First we define the functor $\sH_1\to \sM_1$.

 Let $H$ be an object of $\sH_1$. The corresponding object $[\sF\to G]$ of $\sM_1$ is as follows.
\begin{eqnarray*}
G & = & W_{-1}H_\Z\bs W_{-1}H_V/F^0W_{-1}H_V. \\ 
\sF_\et & = & \gr^W_0(H_\Z). \\ 
\sF_\infi & = & \text{the formal completion of}\;\Ker(\gr^W_0(H_V)\to \gr^W_0(H_\C)).
\end{eqnarray*}
Here $\sF_\et$ is the \'etale part of $\sF$ and $\sF_\infi$ is the infinitesimal part of $\sF$. The homomorphism $\sF=\sF_\et\oplus \sF_\infi\to G$ is given as follows. 

The part $\sF_\et\to G$: Let $x\in \sF_\et=\gr^W_0H_\Z$. Since the sequence $0\to W_{-1}H_\Z\to H_\Z\to \gr^W_0H_\Z\to 0$ is exact, we can lift $x$ to an element $y$ of $H_\Z$ and this lifting is unique modulo $W_{-1}H_\Z$. Since the sequence $0\to F^0W_{-1}H_V\to F^0H_V\to F^0\gr_0^WH_V\to 0$ is exact, we can lift $x$ to an element $z$ of $F^0H_V$ and this lifting is unique modulo $F^0W_{-1}H_V$. Note that $y-z\in W_{-1}H_V$. We have a well-defined homomorphism $$\sF_\et=\gr^W_0H_\Z \lra W_{-1}H_\Z\bs W_{-1}H_V/F^0W_{-1}H_V=G\;;\;x\mapsto y-z.$$

The part $\sF_\infi\to G$: Note that $\Hom(\sF_\infi, G)$ is identified with $\Hom_\C(\Lie(\sF_\infi), \Lie(G))$. We give the corresponding homomorphism $\Lie(\sF_\infi)= \Ker(\gr^W_0(H_V) \to \gr^W_0(H_\C))\to \Lie(G)=W_{-1}H_V/F^0W_{-1}H_V$. 
 Let $x\in \Ker(\gr^W_0(H_V)\to \gr^W_0(H_\C))$. The given splitting of the weight filtration on $\Ker(H_V\to  H_\C)$ sends $x$ to an element $y$ of $\Ker(H_V\to  H_\C)$.
 Since the sequence $0\to F^0W_{-1}H_V\to F^0H_V\to F^0\gr^W_0H_V\to 0$ is exact, we can lift $x$ to an element $z$ of $F^0H_V$ and this lifting is unique modulo $F^0W_{-1}H_V$. Note that $y-z\in W_{-1}H_V$. We have a well-defined homomorphism $$\Ker(\gr^W_0H_V \to  \gr^W_0H_\C) \lra W_{-1}H_V/F^0W_{-1}H_V=\Lie(G)\;;\;x \mapsto y-z.$$

\end{para}

\begin{para}\label{MtoH} We give the functor
$\sM_1\to \sH_1$.

 Let $[\sF\to G]$ be an object of $\sM_1$. The corresponding object $H$ of $\sH_1$ is as follows. Let $0\to L \to G \to A \to 0$ be the exact sequence of 
 commutative algebraic groups  where $A$ is an abelian variety and $L$ is affine. Let $\sF_\et$ be the \'etale part of $\sF$ and let $\sF_\infi$ be the infinitesimal part of $\sF$. 

First, 
$H_\Z$ is the fiber product of $\sF_\et \to G \leftarrow \text{Lie}(G)$ where $\text{Lie}(G)\to G$ is the exponential map so we have a commutative diagram of exact sequences 
$$\begin{matrix} 0 & \to & H_1(G, \Z) & \to & H_\Z & \to & \sF_\et & \to & 
0\\
& & \downarrow & & \downarrow & & \downarrow & & \\
0 & \to & H_1(G, \Z)& \to & \text{Lie}(G) & \to & G & \to & 0.\end{matrix} $$
The weight filtration on $H_\Z$ is given as follows. $$W_0H_\Z=H_\Z, \quad
W_{-1}H_\Z= H_1(G, \Z), $$ $$W_{-2}H_\Z=H_1(L, \Z)=\text{Ker}(H_1(G, \Z)\to H_1(A, \Z)), \quad W_{-3}H_\Z=0.$$

Next, $$H_V= H_\C \oplus \Lie(L_a)\oplus \Lie(\sF_\infi)$$ where
$L_a$ is the additive part of $L$. 
The weight filtration on $H_V$ is as follows. $$W_0H_V=H_V, \quad
W_{-1}H_V= H_1(G, \C)\oplus \Lie(L_a), $$ $$W_{-2}H_V=H_1(L, \C) \oplus \Lie(L_a), \
 \quad W_{-3}H_V=0.$$

The splitting of the weight filtration on $\Ker(H_V\to  H_\C)=\Lie(L_a)\oplus \Lie(\sF_\infi)$ is defined to be this direct decomposition.

The Hodge filtration on $H_V$ is given as follows. 
$$F^{-1}H_V=H_V, \quad F^1H_V=0,$$
$$F^0H_V= \text{Ker}(H_V\to \text{Lie}(G))$$
where $H_V\to \text{Lie}(G)$ is defined as follows. The part
$ H_\C\to \Lie(G)$ of it is the $\C$-linear map induced by the canonical map $H_\Z\to \Lie(G)$. The part 
$\Lie(L_a)\to \Lie(G)$ of it is the inclusion map. The part $\Lie(\sF_\infi)\to \Lie(G)$ of it is the homomorphism induced by $\sF_\infi\to G$. We have hence
$H_V/F^0H_V\cong \Lie(G)$.

It is easy to see that this functor $\sM_1\to \sH_1$ is quasi-inverse to the functor $\sH_1\to \sM_1$ in \ref{HtoM}. 
\end{para}

\begin{para}\label{ext1} The induced functor $\sH_{1, \{-1,-2\}}\overset{\simeq}\to \sM_{1, \{-1,-2\}}$ is especially simple. It is
$$H\longmapsto [0 \to H_\Z\bs H_V/F^0H_V].$$

\end{para}

\section{Generalized Albanese varieties}
\label{sec:Alb_with_mod}

Let $k$ be an algebraically closed field of characteristic $0$ and let $X$ 
be a proper smooth algebraic variety over $k$ of dimension $n$.  We review generalized Albanese varieties $\Alb_{\sF}(X)$ defined in 
\cite{Ru}\footnote{In \cite{Ru}, X was assumed to be projective. 
This assumption was used only for singular X, which is not our concern 
here. The construction of the $\Alb_{\sF}(X)$ is valid in the same way 
for proper X. }. 
For an effective divisor $Y$ on $X$, the 
generalized Albanese variety $\Alb(X, Y)$ of  modulus $Y$ is 
a special case of $\Alb_{\sF}(X)$.

The Albanese variety $\Alb(X)$ is defined by a universal mapping 
property for morphisms from $X$ 
to abelian varieties. 
Similarly, the generalized Albanese variety $\Alb(X, Y)$ of modulus $Y$ is characterized by a universal property for 
morphisms from $X-Y$ into commutative algebraic groups with ``modulus'' $\leq Y$. 
See Prop.~\ref{propmodulus}.

\begin{para} Let $\ul{\Div}_X$ be the sheaf of abelian groups on $\Ab{k}$ defined as follows. For any commutative ring $R$ over $k$, 
$\ul{\Div}_X(R)$ is the group of all Cartier divisors on $X\otimes_k R$
 generated locally on $\Spec(R)$ by effective Cartier divisors which are flat over $R$. Let $\ul{\Pic}_X$ be the Picard functor, and let $\ul{\Pic}^0_X\subset  \ul{\Pic}_X$ be the Picard variety of $X$. 
We have the class map  $ \ul{\Div}_X\to \ul{\Pic}_X$. Let $\ul{\Div}^0_X\subset \ul{\Div}_X$ be the inverse image of $\ul{\Pic}^0_X$.

\end{para}

\begin{para} Let $\Lambda$ be the set of all 
 subgroup sheaves $\sF$ of $\ul{\Div}^0_X$ such that $\sF\cong \Z^t \oplus (\widehat \G_a)^s$ for some $t$ and $s$. For $\sF\in \Lambda$,  we have an object
$[\sF\to \ul{\Pic}^0_X]$ of
 $\sM_{1, \{0,-1\}}$. The generalized Albanese variety 
 $\Alb_{\sF}(X)$ is defined in \cite{Ru} to be the Cartier dual of 
 $[\sF\to \ul{\Pic}^0_X]$. It is an object of $\sM_{1, \{-1,-2\}}$ and hence is a commutative connected algebraic group over $k$.
 
 If $\sF, \sF'\in \Lambda$ and $\sF\subset \sF'$,  we have a canonical surjective homomorphism $\Alb_{\sF'}(X) \to \Alb_{\sF}(X)$. In the case $\sF=0$, $\Alb_{\sF}(X)=\Alb(X)$. 

\end{para}

\begin{para} \label{defAlb(X,Y)}
Let $Y$ be an effective divisor of $X$. Then the generalized Albanese variety with modulus $Y$ is defined as $\Alb_{\sF}(X)$ where $\sF=\sF_{X,Y}\in \Lambda$ is defined as follows. 
The \'etale part $\sF_\et$ of $\sF$ is the subgroup of $\ul{\Div}^0_X(k)$
consisting of all divisors whose support is contained in the support of $Y$.
The infinitesimal part $\sF_{\infi}$ of $\sF$ is as follows. Let $I$ be the ideal of $\sO_X$ (though the notation $\sO_X$ is often used in this paper for the sheaf of analytic functions, $\sO_X$ here stands for the usual algebraic object  on the Zariski site) defining $Y$, let $I_1$ be the ideal of $\sO_X$ which defines the reduced part of $Y$, and let $J=II_1^{-1}\subset \sO_X$. Then $\sF_{\infi}$ is the formal completion $\widehat \G_a\otimes_k H^0(X, J^{-1}/\sO_X)$ of the 
finite dimensional $k$-vector space $H^0(X, J^{-1}/\sO_X)$,  which is embedded in
$\ul{\Div}^0_X$
by the exponential map
$$\exp\;:\; \widehat \G_a\otimes_k H^0(X, J^{-1}/\sO_X)\to \ul{\Div}^0_X.$$

 If $Y'$ is an effective divisor on $X$ such that $Y'\geq Y$, then $\sF_{X, Y}\subset \sF_{X, Y'}$ and hence we have a canonical surjective homomorphism $\Alb(X, Y') \to \Alb(X, Y)$. In the case $Y=0$, $\Alb(X, Y)=\Alb(X)$. 

In the case $X$ is a curve, $\Alb(X, Y)$ coincides with the generalized Jacobian variety $J(X, Y)$ of $X$ with modulus $Y$ as is explained in \cite[Exm.\ 2.34]{Ru}. 

\end{para}

\begin{para}\label{pole} As in \cite{Ru}, for $\sF\in \Lambda$, we have a rational map 
$$\alpha_{\sF}: X\to \Alb_{\sF}(X)$$
which is canonically defined up to translation by a $k$-rational point of $\Alb_\sF(X)$. 
If $\sF'\in \Lambda$ and $\sF\subset \sF'$, then $\alpha_{\sF}$ and $\alpha_{\sF'}$ are compatible via the canonical surjection 
$\Alb_{\sF'}(X) \to \Alb_{\sF}(X)$. 

For an effective divisor $Y$ on $X$, we denote the rational map $\alpha_{\sF_{X,Y}}$ 
simply by $\alpha_{X,Y}$. 
In Prop.~\ref{propmodulus} (2) below, we give a universal property of $\alpha_{X,Y}: X\to\Alb(X, Y)$ concerning rational maps from $X$ to commutative algebraic groups. This 
property follows from a general universal property of  $\alpha_{\sF}: X\to \Alb_{\sF}(X)$ obtained in \cite{Ru}, as is shown in \ref{rational} below. 

\end{para}

\begin{para} \label{defmodulus}
Let $G$ be a commutative connected algebraic group over $k$ and let $\phe: X\to G$ be a rational map. 
We define an effective divisor $\modu(\phe)$ on $X$ which we call the modulus of $\phe$.

We treat $X$ as a scheme. This divisor $\modu(\phe)$ is written in the form
$\sum_v \modu_v(\phe)v$, where $v$ ranges over all points of $X$ of codimension one and $\modu_v(\phe)$ is a non-negative integer defined as follows.

Let $0\to L\to G \to A\to 0$ be the canonical decomposition of $G$ and take an isomorphism 

\medskip

(1) \quad $L_a \cong (\G_a)^s$

\medskip 
\noindent
where $L_a$ is the additive part of $L$.

Let $K$ be the function field of $X$, and regard $\phe$ as an element of $G(K)$. 
 Since the local ring $\sO_{X, v}$ of $X$ at $v$ is a discrete valuation ring and since $A$ is proper, we have $A(O_{X, v})=A(K)$. By the commutative diagram of exact sequences
 $$\begin{matrix} 0 & \to & L(O_{X, v}) & \to & G(\sO_{X, v}) & \to & A(O_{X, v}) & \to & 0 \\
 && \cap && \cap && \Vert && \\
 0 & \to & L(K) & \to & G(K) & \to & A(K) & \to & 0, \end{matrix}$$
 we have
$G(K)=L(K)G(O_{X, v})$. Write $\phe\in G(K)$ as 

\medskip

(2) \quad $\phe= lg$ \; with $l\in L(K)$ and $g\in G(O_{X, v})$, 

\medskip
\noindent
Let $(l_j)_{1\leq j\leq s}$ 
 be the image of $l$ in 
 $(\G_a)^s(K)$. 
 
 If $\phe$ belongs to $G(O_{X, v})$, we define $\modu_v(\phe)=0$.
 Assume $\phe$ does not belong to $G(O_{X,v})$. Then we define
 $$\modu_v(\phe)= 1+ \max(\{-\text{ord}_{v}(l_j)\;|\;1\leq j\leq s\}\cup\{0\}).$$ 
This integer $\modu_v(\phe)$ is independent of the choice of the isomorphism (1) and of the choice of the presentation (2) of $\phe$. 
 
For example, if $G=\G_m$, $\modu_v(\phe)$ is $0$  if the element $\phe$ of $G(K)=K^\times$ belongs to $\sO_{X,v}^\times$, and is $1$ otherwise. If $G=\G_a$, 
$\modu_v(\phe)$ is $0$  if the element $\phe$ of $G(K)=K$ belongs to $\sO_{X,v}$, and is $m+1$ if $\phe$ has a pole of order $m\geq 1$ at $v$.

\end{para}

\begin{prop}\label{propmodulus} 
Let $G$ be a commutative connected algebraic group over $k$ and 
let $\phe : X\to G$ be a rational map. 

\medskip

{\rm(1)} For a dense open set $U$ of $X$, $\phe$ induces a morphism $U\to G$ (not only a rational map) if and only if the support of $\modu(\phe)$ does not 
meet $U$. 

\medskip

{\rm(2)} Let $Y$ be an effective divisor on $X$. Then the following two conditions (i) and (ii) are equivalent.

\medskip

\qquad {\rm (i)}  There is a homomorphism $h: \Alb(X, Y) \to G$ such that $\phe$ coincides with $h \circ \alpha_{X,Y}$ modulo a translation by $G(k)$.

\medskip

\qquad {\rm(ii)} \:\;
$\modu(\phe) \leq Y.$

\medskip 

Furthermore, if these equivalent conditions are satisfied, such homomorphism $h$ is unique.
\end{prop}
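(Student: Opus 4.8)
The plan is to read off from the definition in \ref{defmodulus} that $\modu_v(\phe)=0$ holds precisely when $\phe\in G(\sO_{X,v})$, that is, precisely when $\phe$ is regular at the codimension-one point $v$. Granting this, the support of $\modu(\phe)$ meets $U$ if and only if $\phe$ fails to be regular at some codimension-one point of $U$. The forward implication is then immediate: if $\phe$ is a morphism on $U$ it is regular at every point of $U$, in particular at every codimension-one point, so the support of $\modu(\phe)$ avoids $U$. For the reverse implication I would invoke the classical theorem of Weil that the locus of indeterminacy of a rational map from a smooth variety to a group variety is either empty or of pure codimension one; since $\phe$ is assumed regular at all codimension-one points of $U$, its indeterminacy locus cannot meet $U$, and therefore $\phe$ restricts to a morphism on $U$.

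\textbf{Part (2).} The main device is the universal property of $\alpha_{\sF}$ established in \cite{Ru}, which I would use in the following form: for $\sF\in\Lambda$, a rational map $\phe:X\to G$ to a commutative connected algebraic group factors as $h\circ\alpha_{\sF}$ for a homomorphism $h$, modulo a translation by $G(k)$, if and only if the subsheaf $\sF_{\phe}\subseteq\ul{\Div}^0_X$ attached to $\phe$ in \cite{Ru} satisfies $\sF_{\phe}\subseteq\sF$, and then $h$ is unique. Uniqueness of $h$ in the proposition is exactly the uniqueness built into this property (equivalently, it follows from the fact that the differences $\alpha_{X,Y}(x)-\alpha_{X,Y}(x')$ generate $\Alb(X,Y)$). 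Taking $\sF=\sF_{X,Y}$, the whole of part (2) then reduces to the single translation
$$\sF_{\phe}\subseteq\sF_{X,Y}\iff\modu(\phe)\le Y,$$
after which conditions (i) and (ii) are obtained from one another by chaining through this equivalence.

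\textbf{The translation lemma (main obstacle).} I expect the real work to lie in this last equivalence, and I would prove it one codimension-one point $v$ at a time, splitting $L$ into its multiplicative and additive parts. Writing $\mathrm{mult}_v(Y)=m_v$, locally at a point $v\in|Y|$ one has $I=\mathfrak{m}_v^{m_v}$, $I_1=\mathfrak{m}_v$ and hence $J=II_1^{-1}=\mathfrak{m}_v^{m_v-1}$, so $J^{-1}=\mathfrak{m}_v^{1-m_v}$; thus the summand $\sF_{\infi}=\widehat{\G}_a\otimes H^0(X,J^{-1}/\sO_X)$ admits exactly the additive rational functions with a pole of order $\le m_v-1$ at $v$, while at $v\notin|Y|$ no pole is admitted. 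On the modulus side, for the additive part the formula of \ref{defmodulus} reads $\modu_v(\phe)=(\text{pole order})+1$, so the condition ``pole order $\le m_v-1$'' is literally $\modu_v(\phe)\le m_v$. For the multiplicative part, $\sF_\et$ consists of divisors supported on $|Y|$, matching the fact that a multiplicative obstruction at $v$ contributes $\modu_v(\phe)=1$, admissible exactly when $v\in|Y|$, i.e. when $m_v\ge1$. The content of the lemma is therefore to verify that the sheaf $\sF_{\phe}$ decomposes along these two parts compatibly with this local bookkeeping; once the definitions of $\sF_{\phe}$ (via pullback of the multiplicative and infinitesimal data of $G$ along $\phe$) and of $\sF_{X,Y}$ in \ref{defAlb(X,Y)} are unwound, the asserted equivalence becomes precisely the pointwise comparison just described. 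As a by-product, applying this with $\phe=\alpha_{X,Y}$ and $h=\mathrm{id}$ gives $\modu(\alpha_{X,Y})\le Y$, which furnishes the easy implication (i)$\Rightarrow$(ii) directly as well, since $\modu$ is non-increasing under post-composition with a homomorphism of algebraic groups (the induced map on additive parts being $k$-linear) and unchanged by translation by a point of $G(k)$.
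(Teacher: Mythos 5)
Your proposal follows essentially the same route as the paper's own proof of part (2): you reduce, via the universal property of $\alpha_{\sF}$ from \cite{Ru}, to the equivalence $\im(\tau_\phe)\subseteq \sF_{X,Y} \iff \modu(\phe)\leq Y$, and you verify it exactly as the paper does --- pointwise at codimension-one points, splitting into the \'etale (multiplicative) and infinitesimal (additive) parts and comparing pole orders against $J^{-1}=I_1I^{-1}$, with the remaining unwinding of $\tau_\phe$ deferred to its construction in \cite{Ru}, which is also what the paper cites. For part (1), which the paper dismisses as easy without proof, your argument via Weil's theorem on the purity of the indeterminacy locus is a correct way to supply the details.
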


It is easy to prove (1). The proof of (2) is given in \ref{proppf} below after we review results on $\Alb_{\sF}(X)$ from \cite{Ru}.



\begin{para}\label{rational}  We review a general universal property of $\Alb_{\sF}(X)$  proved in \cite{Ru} concerning rational maps from $X$ into commutative algebraic groups.

Let $\phe:X\to G$ be a rational map into a commutative connected algebraic group $G$, 
and let $L$ be the canonical connected affine subgroup such that the 
quotient $G/L$ is an abelian variety. 
One observes that $\phe$ induces a natural transformation 
$\tau_\phe:L^\vee \to \ul{\Div}^0_X$ (see \cite[section 2.2]{Ru}), 
where $L^\vee = \ul{\Hom}_{\Ab{k}}(L,\G_m)$ is the Cartier dual of $L$.
It is shown in \cite[section 2.3]{Ru} that if $\sF\in \Lambda$, 
there is a rational map $\alpha_{\sF}: X\to \Alb_{\sF}(X)$ for which the corresponding homomorphism $\tau_{\alpha_{\sF}}: \sF\to \ul{\Div}^0_X$ coincides with the inclusion map, and such rational map $\alpha_{\sF}$ is unique up to translation by a $k$-rational point of $\Alb_{\sF}(X)$. 
For a rational map $\phe:X\to G$ into a commutative connected algebraic group $G$ and for $\sF\in \Lambda$, there is a homomorphism $h:\Alb_{\sF}(X) \to G$ such that $f$ coincides with $h\circ \alpha_{\sF}$ up to a translation by $G(k)$ if and only if the image of the homomorphism $\tau_\phe:L^\vee \to \ul{\Div}^0_X$  is contained in $\sF$. Furthermore, if such $h$ exists, it is unique. 

Moreover,  any rational map $\phe:X\to G$ into a commutative connected algebraic group $G$ coincides with $h\circ \alpha_{\sF}$ up to a translation by $G(k)$ for some $\sF\in \Lambda$ and for some homomorphism $h: \Alb_{\sF}(X)\to G$. This is because there is always some $\sF\in \Lambda$ which contains the image of
$L^{\vee}\to \ul{\Div}^0_X$. 
\end{para}

\begin{para}\label{proppf}  
We prove Prop.~\ref{propmodulus}.  
By \ref{rational} we find that  condition (i) of \ref{propmodulus} (2) is equivalent to 

\qquad (i') \quad $\im(\tau_\phe) \subset \sF_{X,Y}$. \\
Write $$Y=\sum_v e_v v$$
where $v$ ranges over all points of $X$ of codimension one and $e_v\in \N$. 
Condition (ii) of \ref{propmodulus} (2) is expressed as 

\qquad (ii') \quad  $\modu_v(\phe) \leq e_v$ 
for all points $v$ of codimension one in $X$. \\
Fix an isomorphism $L \cong (\G_m)^t\times (\G_a)^s$.
 For each point
$v$ of $X$ of codimension one, take a presentation $\phe=lg$ as in (2) in \ref{defmodulus}, let $(l'_{v,j})_{1\leq j\leq t}$ be the image of $l$ in $(\G_m)^t(K)=(K^\times)^t$, and as in \ref{defmodulus}, let $(l_{v,j})_{1\leq j\leq s}$ be the image of $l$ in $(\G_a)^s(K)=K^s$. Note that

(a) \quad $\phe\in G(\sO_{X,v})$ if and only if $l'_{v,j}\in \sO_{X,v}^\times$ for $1\leq j\leq t$ and $l_{v,j}\in \sO_{X, v}$ for $1\leq j\leq s$.\\
By construction of the transformation $\tau_\phe$ 
in \cite[section 2.2]{Ru},  we have the following (b) and (c).

(b) The \'etale part of $\tau_{\phe}$ 
$$\tau_{\phe,\text{\'et}}: \Z^t\to \ul{\Div}^0_X(k)$$
sends the $j$-th base of $\Z^t$ ($1\leq j\leq t$) to 
the divisor $\sum_v \text{ord}_v(l'_{v,j})v$. 

\medskip

(c) The infinitesimal part of $\tau_{\phe}$ 
$$\tau_{\phe,\text{inf}}: (\widehat \G_a)^s\to \ul{\Div}^0_X$$
has the form
$$(a_j)_{1\leq j\leq s}\mapsto \exp\left(\sum_{j=1}^s a_jf_j\right)$$
for some $f_j\in \Gamma(X, K/\sO_X)=\text{Lie}(\ul{\Div}^0_X)$ ($1\leq j\leq s$) such that for any point $v$ of $X$ of codimension one, the stalk of $f_j$ at $v$ coincides with $l_{v,j}\mod~\sO_{X,v}$. 

Condition (i') is equivalent to the condition that the following (i'$_{\text{\'et}}$) and (i'$_{\text{inf}}$) are satisfied.

\begin{tabular}{ll} 
\qquad (i'$_{\text{\'et}}$) & The image of $\tau_{\phe,\text{\'et}}$ is contained in the \'etale part of $\sF_{X,Y}$. \\ 
\qquad (i'$_{\text{inf}}$) & The image of $\tau_{\phe,\text{inf}}$ is contained in the infinitesimal part of $\sF_{X,Y}$. 
\end{tabular} \\
By the above (b),  (i'$_{\text{\'et}}$) is equivalent to the condition that the following (i'$_{\text{\'et},v}$) is satisfied for any point $v$ of $X$ of codimension one.

\begin{tabular}{ll} 
\qquad (i'$_{\text{\'et},v}$) & If $e_v=0$, then $l'_{v,j}\in \sO_{X,v}^\times$ for $1\leq j\leq t$. 
\end{tabular} \\
On the other hand, by the above (c), (i'$_{\text{inf}}$) is equivalent to 

\hspace{70pt} $f_j\in \Gamma(X, J^{-1}/\sO_X)$ for $1\leq j\leq s$,\\
and hence equivalent to the condition that the following (i'$_{\text{inf},v}$) is satisfied for any point $v$ of $X$ of codimension one.

\begin{tabular}{ll} 
\qquad (i'$_{\text{inf},v}$) &  If $e_v=0$, then $l_{v,j}\in \sO_{X,v}$ for $1\leq j\leq s$. \\ 
 & If $e_v\geq 1$, then $\text{ord}_v(l_{v,j})\geq 1-e_v$ for $1\leq j\leq s$. 
\end{tabular} \\
By the above (a), for each $v$, (i'$_{\text{\'et},v}$) and (i'$_{\text{inf},v}$)
are satisfied if and only if $\modu_v(\phe)\leq e_v$.\qed

\end{para}

\begin{cor} 
For any $\sF\in \Lambda$, there exists an effective divisor $Y$ 
such that $\sF\subset \sF_{X, Y}$. 
\end{cor}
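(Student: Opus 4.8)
The plan is to read off the defining data of $\sF$ and to build $Y$ so that its support carries the étale part while its multiplicities dominate the poles occurring in the infinitesimal part. Since $\sF \in \Lambda$ we have $\sF \cong \Z^t \oplus (\widehat{\G}_a)^s$, and correspondingly $\sF = \sF_\et \oplus \sF_{\infi}$; I will treat the two summands separately and compare each with the explicit description of $\sF_{X,Y}$ recalled in \ref{defAlb(X,Y)}.

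First I would handle the étale part. The group $\sF_\et \subset \ul{\Div}^0_X(k)$ is finitely generated, say by divisors $D_1, \dots, D_t$, whose supports involve only finitely many codimension-one points of $X$; collect these into a finite set $Z$. By the very definition of the étale part of $\sF_{X,Y}$ in \ref{defAlb(X,Y)} (namely all divisors supported on the support of $Y$), it suffices for the étale inclusion that the support of $Y$ contain $Z$.

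Next I would handle the infinitesimal part. The embedding $\sF_{\infi} \inj \ul{\Div}^0_X$ induces on Lie algebras an injection $\Lie(\sF_{\infi}) \inj \Lie(\ul{\Div}^0_X) = \Gamma(X, K/\sO_X)$, where $K$ is the sheaf of rational functions as in \ref{proppf}(c); its image is spanned by finitely many sections $f_1, \dots, f_s$, and $\sF_{\infi}$ is the image of $(a_j)_j \mapsto \exp\bigl(\sum_j a_j f_j\bigr)$. Each $f_j$ has poles of bounded order at finitely many codimension-one points. I would then enlarge $Z$ to a finite set $V$ also containing all these poles and set
$$e_v = 1 + \max\bigl(\{-\mathrm{ord}_v(f_j) : 1 \leq j \leq s\} \cup \{0\}\bigr)$$
for $v \in V$ and $e_v = 0$ otherwise, giving an effective divisor $Y = \sum_v e_v v$ whose support is $V \supseteq Z$.

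Finally I would check $\sF \subset \sF_{X,Y}$. The étale containment holds by the first step. For the infinitesimal part, recall $J = I I_1^{-1}$, so $H^0(X, J^{-1}/\sO_X)$ consists exactly of the sections of $K/\sO_X$ with $\mathrm{ord}_v \geq 1 - e_v$ at each $v$ with $e_v \geq 1$ and no poles elsewhere; the choice of $e_v$ guarantees $\mathrm{ord}_v(f_j) \geq 1 - e_v$ for all $v$ and $j$, so each $f_j$ lies in $H^0(X, J^{-1}/\sO_X)$ and hence $\sF_{\infi}$ sits inside the infinitesimal part of $\sF_{X,Y}$. This is precisely the condition $(\mathrm{i}'_{\mathrm{inf}})$ appearing in \ref{proppf}, so the whole verification is a reprise of that computation. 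I do not expect a genuine obstacle here; the only point demanding care is the bookkeeping of multiplicities, in particular the shift by $1$ forced by the passage through the reduced part in the definition of $J$.
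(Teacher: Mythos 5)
Your proof is correct, and it takes a genuinely different route from the paper's. The paper disposes of the corollary in one line using the machinery it has just built: it sets $Y = \modu(\alpha_{\sF})$, the modulus of the canonical rational map $\alpha_{\sF}: X \to \Alb_{\sF}(X)$ of \ref{pole}; condition (ii) of Prop.~\ref{propmodulus}(2) then holds trivially for $\phe = \alpha_{\sF}$, so by the chain of equivalences (ii) $\Leftrightarrow$ (i) $\Leftrightarrow$ (i') established in \ref{proppf}, together with the fact (\ref{rational}) that $\tau_{\alpha_{\sF}}$ is the inclusion $\sF \inj \ul{\Div}^0_X$, one gets $\sF = \im(\tau_{\alpha_{\sF}}) \subset \sF_{X,Y}$. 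You never invoke $\alpha_{\sF}$ or Prop.~\ref{propmodulus}; instead you build $Y$ by hand from the subsheaf $\sF \subset \ul{\Div}^0_X$ itself: the supports of generators of $\sF_\et$, the polar points of the Lie-algebra generators $f_1, \dots, f_s$ of $\sF_{\infi}$, and multiplicities $e_v = 1 + \max\bigl(\{-\mathrm{ord}_v(f_j)\}_j \cup \{0\}\bigr)$. The pole bookkeeping is the same as that inside the proof \ref{proppf} --- and the two identifications you use without proof, namely that a homomorphism $(\widehat\G_a)^s \to \ul{\Div}^0_X$ is $\exp$ of its Lie-algebra map with $f_j \in \Gamma(X, K/\sO_X)$, and that membership in $H^0(X, J^{-1}/\sO_X)$ is detected by order conditions at codimension-one points (which uses smoothness, hence normality, of $X$) --- are exactly the ones the paper also uses there without proof, so your level of rigor matches the paper's; the difference is that you apply this bookkeeping directly to the inclusion $\sF \inj \ul{\Div}^0_X$ rather than to the transformation $\tau_{\phe}$ of a rational map. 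What the paper's argument buys is brevity and the conceptual statement that the modulus of $\alpha_{\sF}$ itself is an admissible $Y$; what yours buys is independence from the Albanese theory of \cite{Ru} (existence and universal property of $\alpha_{\sF}$) and an explicit divisor --- indeed, by (c) of \ref{proppf}, your $Y$ is essentially $\modu(\alpha_{\sF})$ written out concretely.
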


\begin{pf} 
Let $Y=\modu(\alpha_\sF)$ be the modulus of the rational map $\alpha_\sF:X\to\Alb_\sF(X)$ 
associated to $\sF\in\Lambda$. Then $\sF\subset \sF_{X, Y}$. 
\end{pf}

\section{Proof of Theorem~\ref{mainthm}}\label{proof}

We prove Thm.~\ref{mainthm}. 
Let $X$ be a proper smooth algebraic variety over $\C$ of dimension $n$, and let $Y$ be an effective divisor on $X$. 
Let $I$ be the ideal of $\sO_X$ which defines $Y$, let $I_1$ be the ideal of $\sO_X$ which defines the reduced part of $Y$, and let $J=II_1^{-1}\subset \sO_X$.
\medskip

\begin{para}\label{H1Hn-1} 
Let $H^1(X, Y_+)(1)$ be the object of $\sH_{1,\{0, -1\}}$ corresponding to the object $[\sF_{X, Y}\to \text{Pic}^0(X)]$ of $\sM_{1, \{0, -1\}}$ in the equivalence of categories \ref{thm2}.
Let $H^{2n-1}(X, Y_-)(n)$ be the object of $\sH_{1,\{-1, -2\}}$ corresponding to the object $\Alb(X, Y)$ of $\sM_{1, \{-1, -2\}}$.

Since the equivalence of categories in \ref{thm2} is compatible with dualities, we have
\begin{equation}\label{HdualH}
H^{2n-1}(X, Y_-)(n) \,\cong\, \ul{\Hom}(H^1(X, Y_+)(1), \Z)(1).
\end{equation}

We prove Thm.~\ref{mainthm} in the following way. First in \ref{H1}, we give an explicit description of 
$H^1(X, Y_+)(1)$. From this, by (\ref{HdualH}), 
we can obtain an explicit description of $H^{2n-1}(X, Y_-)(n)$ as in \ref{Hn-1}. Since $\Alb(X, Y)$ corresponds to $H^{2n-1}(X, Y_-)(n)$ in the equivalence of categories
$\sH_{1, \{-1, -2\}}\simeq \sM_{1, \{-1,-2\}}$, we can obtain from \ref{Hn-1}  the explicit descriptions of $\Alb(X, Y)$ stated in  Thm.~\ref{mainthm}.

 We define objects $H^1(X, Y_+)$ and $H^{2n-1}(X, Y_-)$ of $\sH$ as follows: 
 $H^1(X, Y_+)$ is the Tate twist $(H^1(X, Y_+)(1))(-1)$ of  $H^1(X, Y_+)(1)$,  and 
$H^{2n-1}(X, Y_-)$ is the Tate twist $(H^{2n-1}(X, Y_-)(n))(-n)$ of $H^{2n-1}(X, Y_+)(n)$. 
 These are natural generalizations of the objects of $\sH$ for the curve case considered in \ref{example}.

\end{para}

\begin{para}\label{maps} We define canonical $\C$-linear maps
\begin{equation}\label{map1} H^1(X-Y, \C) \lra H^1(X, \sO_X), 
\end{equation}
\begin{equation}\label{map2} H^{n-1}(X, \Omega_X^n)\lra H_c^{2n-1}(X-Y, \C)
\end{equation}

First assume that $Y$ is with normal crossings. Then
by \cite{De}, 
we have canonical isomorphisms 
$$H^m(X-Y, \C) \cong H^m(X, \Omega_X^{\bullet}(\log(Y))), 
\quad H^m_c(X-Y, \C) \cong 
H^m(X, \Omega_X^{\bullet}(-\log(Y)))$$
for $m\in \Z$, where $\Omega_X^p(\log(Y))$ is the sheaf of differential $p$-forms with log poles along $Y$, and $\Omega_X^p(-\log(Y))=I_1\Omega_X^p(\log(Y))$. Since $\sO_X=\Omega_X^0(\log(Y))$ and $\Omega^n_X=\Omega_X^n(-\log(Y))$, we have canonical maps of complexes $\Omega_X^{\bullet}(\log(Y)) \to \sO_X$ and
$\Omega_X^n[-n] \to \Omega^{\bullet}_X(-\log(Y))$. These maps induces the maps (\ref{map1}) and (\ref{map2}) in the case $Y$ is with normal crossings, respectively.

In general, take a birational morphism $X'\to X$ of proper smooth algebraic varieties over $\C$ such that the inverse image $Y'$ of $Y$ on $X'$ is with normal crossings. Then we have maps
$$H^{n-1}(X, \Omega_X^n)\lra H^{n-1}(X', \Omega_{X'}^n)\lra H^{2n-1}_c(X'-Y', \C)= H_c^{2n-1}(X-Y, \C)$$
where the second arrow is the map (\ref{map2}) for $X'$, and the composition $H^{n-1}(X, \Omega_X^n)\to 
H_c^{2n-1}(X-Y, \C)$ is independent of the choice of $X'\to X$. 
The $\C$-linear dual of (\ref{map2}) with respect to the Poincar\'e duality and Serre duality gives the map (\ref{map1}). The map (\ref{map1}) is also obtained as 
the composition
$$H^1(X-Y, \C)= H^1(X'-Y', \C) \lra H^1(X', \sO_{X'})\overset{\simeq}\longleftarrow H^1(X, \sO_X).$$

\end{para}

\begin{para}\label{H1} Let $H=H^1(X, Y_+)(1)$, the object of $\sH_{1, \{0, -1\}}$ corresponding to the object $[\sF_{X,Y}\to \Pic^0(X)]$ of $\sM_{1, \{0, -1\}}$. We describe $H$. By \cite{BaS} Theorem 4.7 which treats the case $Y$ has no multiplicity, we  can identify $H_\Z$ with $H^1(X-Y, \Z(1))$ and identify the map $H_\C\to \Lie(\Pic^0(X))=H^1(X, \sO_X)$ 
with the map (\ref{map1}) in \ref{maps}. We have $H_V=H_\C\oplus H^0(X, J^{-1}/\sO_X)$, the maps $a:H_\C\to H_V$ and $b: H_V\to H_\C$ are the evident ones, 
the weight filtration is given by $W_0H=H$, $W_{-2}H=0$,
$$W_{-1}H_\Q=H^1(X, \Q(1)), \quad W_{-1}H_V = H^1(X, \C),$$
and the Hodge filtration is given by $F^{-1}H_V=H_V$, $F^1H_V=0$, and
$$F^0H_V = \Ker(H^1(X-Y, \C) \oplus H^0(J^{-1}/\sO_X)\to H^1(X,\sO_X))$$
where the map 
$H^0(J^{-1}/\sO_X) \to H^1(X, \sO_X)$ is the connecting map of the exact sequence $0 \to \sO_X\to J^{-1}\to J^{-1}/\sO_X\to 0$.
\end{para}

\begin{para}\label{Hn-1} Let $H=H^{2n-1}(X, Y_-)(n)$, 
the object of $\sH_{1, \{-1, -2\}}$ corresponding to the object $\Alb(X, Y)$
of $\sM_{1, \{-1, -2\}}$. By  (\ref{HdualH}) in \ref{H1Hn-1}, we obtain the following description of 
$H$ from the description of $H^1(X, Y_+)(1)$ in \ref{H1}.

$$H_\Z=H^{2n-1}_c(X-Y, \Z)/(\text{torsion}), \quad H_V=H_\C\oplus H^{n-1}(X, \Omega_X^n/J\Omega_X^n),$$
the maps $a:H_\C\to H_V$ and $b: H_V\to H_\C$ are the evident ones, 
the weight filtration is given by $W_{-1}H=H$, $W_{-3}H=0$,
$$W_{-2}H_\Q=\Ker(H_\Q\to H^{2n-1}(X, \Q(n))), \quad W_{-2}H_V =\Ker(H_V\to  H^{2n-1}(X, \C)),$$
and the Hodge filtration is given by $F^{-1}H_V=H_V$, $F^1H_V=0$, and
$$F^0H_V = \text{Image}(H^{n-1}(X, \Omega_X^n) 
\lra H_c^{2n-1}(X-Y, \C)\oplus
 H^{n-1}(X, \Omega_X^n/J\Omega^n_X))$$
where the map $H^{n-1}(X, \Omega_X^n) \to H_c^{2n-1}(X-Y, \C)$ is (\ref{map2}) in \ref{maps} 
and the map $H^{n-1}(X, \Omega_X^n) \to H^{n-1}(X, \Omega_X^n/J\Omega^n_X)$ is the evident one.

\end{para}

\begin{para}  We prove Thm.~\ref{mainthm} (2). Let $H=H^{2n-1}(X, Y_-)(n)$. Then
$$\Alb(X, Y)= H_\Z\bs H_V/F^0H_V$$ by \ref{ext1}. Hence the description of $H^{2n-1}(X, Y_-)(n)$ in \ref{Hn-1} proves  Thm.~\ref{mainthm} (2).

\end{para}

\begin{para}\label{Deligne} As a preparation for the proof of Thm.~\ref{mainthm} (1), we review a kind of Serre-duality obtained in Appendix by Deligne in the book \cite{Ha}.

Let $S$ be a proper scheme over a field $k$, let $C$ be a closed subscheme of $S$, let $U=S-C$, and let $I_C$ be the ideal of $\sO_S$ which defines $C$.
Assume $U$ is smooth over $k$ and purely of dimension $n$. 
Let $\sF$ be a coherent $\sO_S$-module. Then for any $p\in \Z$, we have a canonical isomorphism
$$H^p(U, R{\sH}om_{\sO_U}(\sF|_U, \Omega_U^n)) \,\cong\, 
\varinjlim_m \Hom_k(H^{n-p}(X, I_C^m\sF), k).$$
In the case $C$ is empty and $\sF$ is locally free, this is the usual Serre duality
$$H^p(X, {\sH}om_{\sO_X}(\sF, \Omega_X^n)) \,\cong\, 
\Hom_k(H^{n-p}(X,\sF), k).$$
\end{para}

\begin{para} We start the proof of Thm.~\ref{mainthm} (1).

Let $C_Y$ be the subcomplex of $\Omega^{\bullet}_X$ defined as
$$C_Y^p=\ker(\Omega_X^p\to \Omega_Y^p) \;\;\text{for}\; 0\leq p\leq n-1, \quad C_Y^n= J\Omega^n_X.$$

\end{para}

\begin{prop}\label{propHc} For $p=2n, 2n-1$, the maps $H^{p}_c(X-Y, \C) \to H^{p}(X, C_Y)$ induced by the homomorphism $j_!\C\to C_Y$ are isomorphisms.  
\end{prop}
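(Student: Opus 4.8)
The plan is to compare $C_Y$ with the logarithmic complex $\Omega_X^{\bullet}(-\log(Y))$ that computes compactly supported cohomology, and then to extract the two top-degree statements from the top terms of these complexes by Serre duality. First I would reduce to the case that $Y$ is a normal crossings divisor. Choosing a birational $\pi\colon X'\to X$ with $Y'=\pi^{-1}(Y)$ normal crossings (as in \ref{maps}), one has $X-Y\cong X'-Y'$ away from the exceptional locus, so $H^{p}_c(X-Y,\C)=H^{p}_c(X'-Y',\C)$, and one checks the matching statement for $H^p(X,C_Y)$ via $R\pi_{*}$ of the coherent terms in the top two degrees. In the normal crossings situation the sheaves $C_Y^p=\Ker(\Omega_X^p\to\Omega_Y^p)$ become locally free, and a local coordinate computation shows that $C_Y$ is a subcomplex of $\Omega_X^{\bullet}(-\log(Y))$ (with $C_Y^0=I\subseteq I_1=\Omega_X^0(-\log(Y))$ and $C_Y^n=J\Omega_X^n\subseteq\Omega_X^n=\Omega_X^n(-\log(Y))$), the quotient $K:=\Omega_X^{\bullet}(-\log(Y))/C_Y$ being a complex of coherent sheaves supported on $Y$. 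By \cite{De} the comparison $j_!\C\to\Omega_X^{\bullet}(-\log(Y))$ induces $H^{p}_c(X-Y,\C)\cong H^p(X,\Omega_X^{\bullet}(-\log(Y)))$, and the map $j_!\C\to C_Y$ of the statement is its factorization through the subcomplex.

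Next I would exploit that $K$ is supported on $Y$, which has dimension $n-1$: by Grothendieck vanishing $H^t(X,K^s)=0$ for $t\geq n$, so in the hypercohomology spectral sequence $E_1^{s,t}=H^t(X,K^s)$ the total degree is at most $2n-1$, whence $H^{2n}(X,K)=0$. The long exact sequence attached to $0\to C_Y\to\Omega_X^{\bullet}(-\log(Y))\to K\to 0$ then yields the isomorphism in degree $2n$ directly. For degree $2n-1$ the same spectral sequence identifies the relevant piece of $H^{2n-1}(X,K)$ as the cokernel of the de Rham map $H^{n-1}(X,K^{n-1})\to H^{n-1}(X,\Omega_X^n/J\Omega_X^n)$, so the degree $2n-1$ statement reduces (together with surjectivity of one edge map in degree $2n-2$) to showing this map is surjective. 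In both degrees one uses crucially that the top cohomology of a bounded complex of coherent sheaves on the $n$-dimensional $X$ is computed from its top terms, since $H^t(X,-)$ vanishes for $t>n$.

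Finally I would establish the required surjectivity by Serre duality on the proper smooth $X$ combined with the Serre--Deligne duality of \ref{Deligne}. Applying $\Hom(-,\Omega_X^n)$ converts the groups $H^{n-1}(X,-)$ into groups of global sections of the dual sheaves $\mathcal{H}om(J\Omega_X^n,\Omega_X^n)=J^{-1}$ and $\mathcal{H}om(C_Y^{n-1},\Omega_X^n)$, turns the de Rham differential into its transpose, and turns the surjectivity into an injectivity on $H^0$; the space of transpose-closed meromorphic sections appearing there is exactly the space of locally constant functions on $X-Y$, and \ref{Deligne} is what identifies these coherent global-section computations with the Betti groups $H^0(X-Y,\C)$ and $H^1(X-Y,\C)$, so that the resulting isomorphisms are precisely the ones induced by $j_!\C\to C_Y$. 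The main obstacle is this last identification: one must track the pole orders encoded by $I$, $I_1$, $J$ so that the transposed complex really is the meromorphic de Rham complex computing $H^{*}(X-Y,\C)$ in degrees $0$ and $1$, and one must verify the local freeness that keeps Serre duality free of higher $\Ext$ contributions. The normal crossings reduction of the first step is what makes both of these manageable, and the degree $2n-1$ case is where essentially all of the content of the proposition lies.
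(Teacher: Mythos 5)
Your proposal contains two genuine gaps, and the first already breaks the opening step. You reduce to normal crossings by taking $\pi\colon X'\to X$ with $Y'=\pi^{-1}(Y)=\pi^{*}Y$ and asserting that the statement for $H^{p}(X,C_Y)$ can be matched with the one for $H^{p}(X',C_{Y'})$ ``via $R\pi_{*}$ of the coherent terms''. But the comparison map you would push forward does not exist in general: in degree $n$ it requires that pullback of $n$-forms carry $J\Omega^n_X$ into $J'\Omega^n_{X'}$, i.e.\ that along each exceptional divisor $E$ the order of vanishing of the Jacobian be at least $\mathrm{ord}_E(\pi^{*}Y_1)-1$. Already for a \emph{reduced} $Y$ with a point of multiplicity $m\geq 3$ on a surface this fails: blowing up that point, the Jacobian vanishes to order $1$ along $E$ while $J'=I'(I_1')^{-1}$ vanishes to order $m-1$, so $\pi^{*}(J\Omega^2_X)=\pi^{*}\Omega^2_X\not\subseteq J'\Omega^2_{X'}$. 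If instead you change the multiplicities of $Y'$ along the exceptional divisors to make the map exist, you then owe a computation of $R^{q}\pi_{*}$ of every term of $C_{Y'}$ in the range affecting $H^{2n-1}$ and $H^{2n}$, which is a substantial unproven claim (of Grauert--Riemenschneider type). The paper avoids this entirely: it never passes to normal crossings, but compares $C_Y$ with $C_{Y_1}$ for the possibly singular reduced divisor $Y_1$, proves the reduced case by the five lemma against $0\to j_!\C\to\C_X\to\C_{Y_1}\to 0$ together with dimension vanishing, and then shows that changing multiplicities does not change $H^{2n-1}$ and $H^{2n}$ (Lemma~\ref{Y'Y''}).

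The second gap is the duality step, where you yourself locate ``essentially all of the content''. You want to dualize the single groups $H^{n-1}(X,K^{n-1})$ and $H^{n-1}(X,\Omega^n_X/J\Omega^n_X)$ by Serre duality and identify the results with $H^{1}(X-Y,\C)$ and $H^{0}(X-Y,\C)$ via \ref{Deligne}. This cannot work at finite level: \ref{Deligne} is intrinsically a statement about the limit $\varinjlim_m\Hom(H^{n-p}(X,I_C^m\sF),k)$ over pole orders, and a fixed group such as $H^{n-1}(X,\Omega^n_X/J\Omega^n_X)$ depends on the multiplicities of $Y$ (it contributes the additive part of $\Alb(X,Y)$, cf.~\ref{Hn-1}), so it is not a topological invariant of $X-Y$. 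Moreover the local freeness you hope to verify is false: the $K^{p}$ are torsion sheaves, and even $C_Y^{p}=\Ker(\Omega^p_X\to\Omega^p_Y)$ has projective dimension $1$ at crossing points of $Y_1$, so Serre duality ``free of higher Ext contributions'' is not available. The paper's proof is organized precisely around this difficulty: Lemma~\ref{Y'Y''} filters $C_{Y''}/C_{Y'}$ by powers of the ideal of $\Sing(Y_1)$, obtains surjectivity of the transition maps for dimension reasons, identifies the inverse limit by \ref{Deligne} and kills it by injectivity of $d$; the general case then takes a second limit over all $Y'$ supported on $Y_1$, identifies $\varprojlim_{Y'}H^{2n-1}(X,C_{Y'})$ with $H^{2n-1}_c(X-Y,\C)$ using \ref{Deligne} together with Grothendieck's algebraic de Rham theorem, and gets injectivity from the retraction through $C_{Y_1}$. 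Your outline has no substitute for this limit structure, so the two surjectivity statements you defer (the edge map in degree $2n-2$ and the map $H^{n-1}(X,K^{n-1})\to H^{n-1}(X,\Omega^n_X/J\Omega^n_X)$) remain unproven and are not provable by the finite-level duality you describe.
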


\begin{para} We prove Prop.~\ref{propHc} in the case $Y=Y_1$. 
We have an exact sequence of complexes
$$0 \lra C_{Y_1} \lra \Omega_X^{\bullet}\lra \Omega_{Y_1}^{\leq n-1}\lra 0.$$
Since the support of $\Omega_{Y_1}^{\leq n-1}$ is of dimension $\leq n-1$ and since $\Omega_{Y_1}^{\leq n-1}$ has only terms of degree $\leq n-1$, we have $H^p(X, \Omega_{Y_1}^{\leq n-1})=0$ for $p\geq 2n-1$. Hence 
$$H^{2n}(X, C_{Y_1})\cong H^{2n}(X, \Omega_X^{\bullet})\cong H^{2n}(X, \C) \cong H^{2n}_c(X-Y, \C).$$ The above exact sequence of complexes induces the lower row of the
commutative diagram of exact sequences
$$\begin{matrix} 
H^{2n-2}(X, \C)& \to & H^{2n-2}(Y_1, \C) & \to & H^{2n-1}_c(X-Y_1, \C) &\to &  H^{2n-1}(X, \C) & \to 0 \phantom{.} \\ 
\downarrow & & \downarrow &  & \downarrow & &  \downarrow & & \\ 
H^{2n-2}(X, \Omega^{\bullet}_X)& \to & H^{2n-2}(Y_1, \Omega_{Y_1}^{\leq n-1}) & \to & H^{2n-1}(X, C_{Y_1}) &\to &  H^{2n-1}(X, \Omega_X^{\bullet}) & \to 0.\end{matrix}$$
The vertical arrows except possibly the map $H^{2n-1}_c(X-Y_1, \C) \to H^{2n-1}(X, C_{Y_1})$ are isomorphisms. Hence the last map is also an isomorphism. 
\end{para}

\begin{lem}\label{Y'Y''} Let $Y'$ and $Y''$ be effective divisors on $X$ whose supports coincide with $Y_1$ and assume $ Y'\geq Y''$. Then the canonical map $H^{2n-1}(X, C_{Y'})\to H^{2n-1}(X, C_{Y''})$ is surjective and the canonical map $H^{2n}(X, C_{Y'})\to H^{2n}(X, C_{Y''})$ is an isomorphism.
\end{lem}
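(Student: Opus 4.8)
The plan is to study the evident inclusion of complexes $C_{Y'}\hookrightarrow C_{Y''}$ and the resulting quotient complex, reducing the lemma to the vanishing of two hypercohomology groups of that quotient. First I would record that the hypotheses really do give an honest inclusion of subcomplexes of $\Omega_X^{\bullet}$. Since $Y'\geq Y''$, the defining ideals satisfy $I'\subseteq I''$, so $\sO_{Y'}\surj \sO_{Y''}$ and hence $\Omega_{Y'}^p\surj\Omega_{Y''}^p$; taking kernels of $\Omega_X^p\to\Omega_{Y'}^p$ and $\Omega_X^p\to\Omega_{Y''}^p$ gives $C_{Y'}^p\subseteq C_{Y''}^p$ for $0\le p\le n-1$. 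In top degree, $J'=I'I_1^{-1}\subseteq I''I_1^{-1}=J''$ gives $C_{Y'}^n=J'\Omega_X^n\subseteq J''\Omega_X^n=C_{Y''}^n$. Setting $Q=C_{Y''}/C_{Y'}$ and taking the long exact hypercohomology sequence of $0\to C_{Y'}\to C_{Y''}\to Q\to 0$, both assertions of the lemma follow by a routine diagram chase once I establish
\[
H^{2n-1}(X,Q)=H^{2n}(X,Q)=0.
\]

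Next I would analyze $Q$ as a complex supported on $Y_1$. Away from $Y_1$ one has $I'=I''=\sO_X$ and $J'=J''=\sO_X$, so $Q^p=0$ there; hence every $Q^p$ ($0\le p\le n$), and so every cohomology sheaf $\mathcal{H}^q(Q)$, is supported on $Y_1$, which has pure dimension $n-1$. By Grothendieck vanishing this forces $H^p(X,\mathcal{H}^q(Q))=0$ for $p\ge n$, and via the spectral sequence $E_2^{p,q}=H^p(X,\mathcal{H}^q(Q))\Rightarrow H^{p+q}(X,Q)$ this already yields $H^{2n}(X,Q)=0$. For $H^{2n-1}$, the only terms with $p+q=2n-1$ that are not killed for dimension reasons are $(p,q)=(n,n-1)$ and $(n-1,n)$; the first vanishes since $\mathcal{H}^{n-1}(Q)$ is supported in dimension $\le n-1$, so everything comes down to $\mathcal{H}^n(Q)=\operatorname{coker}\bigl(d\colon Q^{n-1}\to Q^n\bigr)$. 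It therefore suffices to show that this cokernel sheaf is supported in dimension $\le n-2$, for then $H^{n-1}(X,\mathcal{H}^n(Q))=0$ and $H^{2n-1}(X,Q)=0$ as well.

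The main obstacle, and the only genuine computation, is thus the claim that $d\colon C_{Y''}^{n-1}\to J''\Omega_X^n$ is surjective at the smooth points of $Y_1$ (which form a dense open subset, the complement $\Sing(Y_1)$ having dimension $\le n-2$). I would check this locally: at a smooth point choose coordinates $z_1,\dots,z_n$ with $Y_1=\{z_1=0\}$, so that locally $I''=(z_1^b)$ and $J''=(z_1^{b-1})$ with $b\ge 1$. A local section of $J''\Omega_X^n$ is $z_1^{b-1}g\,dz_1\wedge\cdots\wedge dz_n$ with $g\in\sO_X$; integrating in $z_1$ produces $h$ with $\partial_{z_1}h=z_1^{b-1}g$ and $h\in(z_1^b)$, so that $\eta=h\,dz_2\wedge\cdots\wedge dz_n$ lies in $I''\Omega_X^{n-1}\subseteq C_{Y''}^{n-1}$ and $d\eta$ is exactly the given section. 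Passing to the quotient $Q^n=J''\Omega_X^n/J'\Omega_X^n$ (and noting $dC_{Y'}^{n-1}\subseteq J'\Omega_X^n$, so $d$ descends to $Q^{n-1}\to Q^n$), the induced map is still surjective at smooth points of $Y_1$. Hence $\mathcal{H}^n(Q)$ vanishes on $Y_1\smallsetminus\Sing(Y_1)$ and is supported on $\Sing(Y_1)$, of dimension $\le n-2$.

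Combining the support estimate for $\mathcal{H}^n(Q)$ with the spectral sequence analysis gives $H^{2n-1}(X,Q)=H^{2n}(X,Q)=0$, and the long exact sequence then yields both the surjectivity of $H^{2n-1}(X,C_{Y'})\to H^{2n-1}(X,C_{Y''})$ and the bijectivity of $H^{2n}(X,C_{Y'})\to H^{2n}(X,C_{Y''})$, as claimed. I expect the delicate point to be precisely the top-degree surjectivity in the third paragraph: the special top term $C_Y^n=J\Omega_X^n$ (rather than $\ker(\Omega_X^n\to\Omega_Y^n)$) is exactly what makes the local antiderivative land in $C_{Y''}^{n-1}$, so the definition of $C_Y$ must be used in an essential way. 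One could alternatively route the top-degree vanishing through the Deligne Serre-duality recalled in \ref{Deligne}, but the direct cokernel computation seems cleanest.
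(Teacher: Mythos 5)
Your reduction of the lemma to the vanishing of $H^{2n-1}(X,Q)$ and $H^{2n}(X,Q)$ for $Q=C_{Y''}/C_{Y'}$ is exactly the paper's starting point (the paper calls this quotient $N$), and your local computation at smooth points of $Y_1$ --- that $d\colon C_{Y''}^{n-1}\to J''\Omega_X^n$ is surjective there because an antiderivative of $z_1^{b-1}g$ lands in $(z_1^{b})$ --- is correct, and correctly identifies why the top term of $C_Y$ is defined as $J\Omega_X^n$. The gap is in the global vanishing step. The cohomology in this lemma is taken in the analytic topology (this is the convention fixed before Thm.~\ref{mainthm}, and it is forced by Prop.~\ref{propHc}, which compares these groups with $H^{\bullet}_c(X-Y,\C)$), and in the analytic topology the dimension bound you invoke as ``Grothendieck vanishing'' --- vanishing of $H^p(X,\mathcal{F})$ for $p$ larger than the \emph{complex} dimension of the support --- is valid only for \emph{coherent} sheaves. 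The cohomology sheaves $\mathcal{H}^q(Q)$ are not coherent: $d$ is not $\sO_X$-linear, so its kernels and cokernels are merely sheaves of $\C$-vector spaces, and for such sheaves the only general bound is the topological one, i.e.\ vanishing above the \emph{real} dimension of the support (for instance $H^{2n-2}(X,\C_{Y_1})\neq 0$, far above degree $n-1$). With the correct bound your $E_2$-analysis collapses: in total degree $2n-1$ every term $E_2^{p,q}$ with $1\leq q\leq n$ survives a priori (you never analyze $\mathcal{H}^q(Q)$ for $q<n$), and even your final term $H^{n-1}(X,\mathcal{H}^n(Q))$, for a sheaf supported on $\Sing(Y_1)$ of real dimension $2n-4$, is not killed for dimension reasons once $n\geq 3$. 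The same objection applies to your derivation of $H^{2n}(X,Q)=0$, although that statement is easily recovered from the first spectral sequence $E_1^{p,q}=H^q(X,Q^p)$, whose terms \emph{are} coherent and supported on $Y_1$; this is how the paper proves it.

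The argument could be repaired by changing topology: on the Noetherian space $X_{\rm Zar}$ Grothendieck's theorem does hold for arbitrary abelian sheaves, and since the terms of $C_{Y'}$, $C_{Y''}$, $Q$ are coherent and $X$ is proper, GAGA applied to the first spectral sequence identifies algebraic with analytic hypercohomology, so it suffices to prove the vanishing for Zariski hypercohomology. But then your local step must be made algebraic: the naive antiderivative of a regular function is not regular, so one has to solve $\partial_{z_1}h\equiv z_1^{b-1}g \bmod z_1^{a-1}$ (where $I'=(z_1^a)$, $I''=(z_1^b)$ locally) by finitely many correction terms, e.g.\ $h=\sum_j(-1)^jz_1^{b+j}g^{(j)}/(b+j)$ with $g^{(j+1)}=\partial_{z_1}g^{(j)}/(b+j)$, which works in characteristic $0$. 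None of this is addressed in your write-up, so as it stands the proof has a genuine gap. For comparison, the paper stays in the analytic topology and proves $H^{2n-1}(X,N)=0$ by a different mechanism: it introduces the subcomplexes $N_k^p=I_{\Sigma}^{\max(k-p,0)}N^p$ (with $\Sigma=\Sing(Y_1)$), shows the transition maps on $H^{2n-1}$ are surjective by a coherent dimension count on the quotients $N_j/N_k$, and identifies $\varprojlim_k H^{2n-1}(X,N_k)$ via Deligne's duality (recalled in \ref{Deligne}) with the dual of an $H^0$ which vanishes because $d\colon (J')^{-1}/(J'')^{-1}\to (J')^{-1}\Omega_X(\log Y_1)/(J'')^{-1}\Omega_X(\log Y_1)$ is injective --- an injectivity statement that is precisely dual to the surjectivity you compute, but which never requires handling non-coherent sheaves.
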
 

\begin{proof} Let $N=C_{Y''}/C_{Y'}$. We have $$N^p=\Ker(\Omega_{Y'}^p\to \Omega_{Y''}^p) \;\;\;\text{for}\;0\leq p\leq n-1, \quad N^n= J''\Omega_X^n/J'\Omega_X^n.$$ Here, $J'=I'I_1^{-1}$, $J''=I''I_1^{-1}$ with $I'$ (resp. $I''$) the ideal of $\sO_X$ which defines $Y'$ (resp. $Y'')$. Since the support of $N$ is of dimension $\leq n-1$  and $N$ has only terms of degree $\leq n$, we have $H^{2n}(X, N)=0$. Hence it is sufficient to prove $H^{2n-1}(X, N)=0$.

Let $\Sigma$ be the set of all singular points of $Y_1$. Then $\Sigma$ 
is of dimension $\leq n-2$. Let $\Omega^{\bullet}_X(\log(Y_1))$ be the de Rham complex on $X-\Sigma$ with log poles along $Y_1-\Sigma$. Then 
as is easily seen, the restriction of $C_Y$ to $X-\Sigma$ coincides with $I\Omega_X^{\bullet}(\log(Y_1))$. Let $I_{\Sigma}$ be the ideal of $\sO_X$ defining $\Sigma$ (here $\Sigma$ is endowed with the reduced structure). For $k\geq 0$, let $N_k$ be the subcomplex of $N$ defined by 
$N_k^p=I_{\Sigma}^{\max(k-p, 0)}N^p$. In particular, $N_0=N$. Then if 
$k\geq j \geq 0$, since the support of $N_j/N_k$ is of 
dimension $\leq n-2$ and $N_j/N_k$ has only terms of degree $\leq n$, we have $H^{2n-1}(X, N_j/N_k)=0$. Hence $H^{2n-1}(X, N_k) \to H^{2n-1}(X, N_j)$ is surjective. 
Applying \ref{Deligne} for 
$S=X$ and $C=\Sigma$ 
yields that  $\varprojlim_k H^{2n-1}(X, N_k)$ 
is the dual vector space of  $H^0(X-\Sigma, 
 [(J')^{-1}/(J'')^{-1}\overset{d}\longrightarrow  (J')^{-1}\Omega_X(\log Y_1)/(J'')^{-1}\Omega_X(\log Y_1)])$.  \linebreak 
Since $d: (J')^{-1}/(J'')^{-1}
\to  (J')^{-1}\Omega_X(\log Y_1)/(J'')^{-1}\Omega_X(\log Y_1)$ is injective, the last cohomology group is $0$. Hence $H^{2n-1}(X, N_k)=0$ for all $k\geq 0$. In particular, $H^{2n-1}(X, N)=0$. 
\end{proof} 

\begin{para} We prove Prop.~\ref{propHc} in general.
 By Lemma \ref{Y'Y''}, the map $\varprojlim_{Y'} H^{2n-1}(X, C_{Y'})\to H^{2n-1}(X, C_Y)$ is surjective, where $Y'$ ranges over all effective divisors on $X$ whose supports coincide with $Y_1$. By \ref{Deligne} which we apply by taking $S=X$ and $C=Y$, we have that
  $\varprojlim_{Y'} H^{2n-1}(X, C_{Y'})$ is the dual vector space of $H^1((X-Y)_{\text{zar}}, \Omega^{\bullet}_{X-Y, \text{alg}})$ where zar means Zariski topology and alg means the algebraic version. But $H^1((X-Y)_{\text{zar}}, \Omega^{\bullet}_{X-Y, \text{alg}}) \simeq H^1(X, \C)$. This proves 
$\varprojlim_{Y'} H^{2n-1}(X, C_{Y'})\cong H^{2n-1}_c(X-Y, \C)$. 
Hence the map \linebreak 
$H^{2n-1}_c(X-Y, \C) \to H^{2n-1}(X, C_Y)$ is surjective. 
Since the composition
$H^{2n-1}_c(X-Y, \C) \linebreak 
\to H^{2n-1}(X, C_Y) \to H^{2n-1}(X, C_{Y_1})\cong H^{2n-1}_c(X-Y, \C)$ is the identity map, the map $H^{2n-1}_c(X-Y, \C) \to H^{2n-1}(X, C_Y)$ is an isomorphism. 
\end{para}

\begin{para}  We prove (1) of Thm.~\ref{mainthm}. Let $S_Y=\Ker(\Omega^{\bullet}_X \to \Omega^{\leq n-1}_Y)$. Then $C_Y\subset S_Y\subset C_{Y_1}$. We have an exact sequence of complexes
$$0 \lra C_Y\lra S_Y\lra \Omega_X^n/J\Omega_X^n[-n]\lra 0.$$
Hence we have an exact sequence
$$H^{2n-1}(X, C_Y) \to H^{2n-1}(X, S_Y) \to H^{n-1}(X, \Omega_X^n/J\Omega^n_X)
\to H^{2n}(X, C_Y) \to H^{2n}(X, S_Y).$$
Note that for $p=2n, 2n-1$, the compositions
$$H^p(X, C_Y)\lra H^p(X, S_Y)\lra H^p(X, C_{Y_1})$$
are isomorphisms by Prop.~\ref{propHc}. Hence by Prop.~\ref{propHc}, we have an isomorphism 
 $$H^{2n-1}(X, S_Y) \,\cong\, H_c^{2n-1}(X-Y, \C) \oplus H^{n-1}(X, \Omega_X^n/J\Omega^n_X)$$
which is compatible with the maps from $H^{n-1}(X, \Omega_X^n)$. Hence (1) of Thm.~\ref{mainthm} follows from (2) of Thm.~\ref{mainthm}. 
\end{para}

\newpage

{\scshape
\begin{flushright}
\begin{tabular}{l}
Department of Mathematics \\  
Faculty of Science, Kyoto University \\ 
Kitashirakawa Oiwake cho, Kyoto 606-8502 \\
Japan \\
{\upshape e-mail: \texttt{kzkt@math.kyoto-u.ac.jp}}\\
{\upshape e-mail: \texttt{henrik.russell@uni-due.de}}\\
\end{tabular}
\end{flushright}
}

\end{document}